%
%
%
\documentclass[12pt]{amsart}
\usepackage{amssymb, colordvi}
\usepackage{multirow}
\usepackage{graphicx}

%
%
\headheight=8pt       \topmargin=30pt
\textheight=621pt     \textwidth=466pt  
\oddsidemargin=6pt   \evensidemargin=6pt
\numberwithin{equation}{section}
\newtheorem{theorem}{Theorem}
\numberwithin{theorem}{section}
\newtheorem{proposition}[theorem]{Proposition}

\newtheorem{ex}[theorem]{Example}
\newtheorem{rem}[theorem]{Remark}

\newtheorem{alg}[theorem]{Algorithm}

\newenvironment{example}{\begin{ex}\rm}{\end{ex}}
\newenvironment{remark}{\begin{rem}\rm}{\end{rem}}


\copyrightinfo{}{}

\newcounter{FNC}[page]
\def\fauxfootnote#1{{\addtocounter{FNC}{2}$^\fnsymbol{FNC}$%
     \let\thefootnote\relax\footnotetext{$^\fnsymbol{FNC}$#1}}}

\newcommand\Jac{\mbox{\rm J-det}}

\newcommand{\calA}{{\mathcal A}}
\newcommand{\calB}{{\mathcal B}}
\newcommand{\calH}{{\mathcal H}}
\newcommand{\calV}{{\mathcal V}}

\newcommand{\oD}{\overline{\Delta}}
\newcommand{\op}{\overline{p}}
\newcommand{\oq}{\overline{q}}
\newcommand{\oy}{\overline{y}}

\newcommand{\C}{\mathbb{C}}
\newcommand{\R}{\mathbb{R}}
\newcommand{\Z}{{\mathbb Z}}

\title{Khovanskii-Rolle continuation for real solutions} 

\author{Daniel J.~Bates}
\address{Department of Mathematics\\
         101 Weber Building\\
         Colorado State University\\
         Fort Collins, CO 80523-1874\\
         USA}

\email{bates@math.colostate.edu}
\urladdr{http://www.math.colostate.edu/~bates/}

\author{Frank Sottile}
\address{Department of Mathematics\\
         Texas A\&M University\\
         College Station\\
         Texas \ 77843\\
         USA}
\email{sottile@math.tamu.edu}
\urladdr{http://www.math.tamu.edu/\~{}sottile/}

\thanks{Bates and Sottile supported by the Institute for Mathematics and Its Applications.}
\thanks{Bates supported by NSF grant DMS-0914674.}
\thanks{Sottile supported by the NSF CAREER grant DMS-0538734 and NSF grant DMS-0701050.}  

\keywords{fewnomial, Khovanskii--Rolle, Gale dual, homotopy, continuation, polynomial system,
  numerical algebraic geometry, real algebraic geometry} 

\subjclass[2000]{14P99, 65H10, 65H20}

\begin{document}

\begin{abstract}
 We present a new continuation algorithm to find all nondegenerate real 
 solutions to a system of polynomial equations.
 Unlike homotopy methods, it is not based on a deformation of the system; 
 instead, it traces
 real curves connecting the solutions of one system of equations to those 
 of another, eventually leading to the desired real solutions.
 It also differs from homotopy methods in that it follows only real paths 
 and computes no complex solutions of the original equations.
 The number of curves traced is bounded by the fewnomial bound for real
 solutions, and the method takes advantage of any slack in that bound.
\end{abstract}
\maketitle

%
\section*{Introduction}
%
Numerical continuation gives efficient algorithms 
%
%
for finding all complex solutions to a system of polynomial equations~\cite{SW05}.
Current implementations~\cite{BHSW06,HOM4PS,V99} routinely and reliably solve systems with
thousands of solutions having dozens of variables.
Often it is the  real solutions 
or the solutions with positive coordinates that are sought.
In these cases, numerical continuation first finds all complex solutions, which are then
sifted to obtain the desired real ones, i.e., those with sufficiently small imaginary part.
This is inefficient, particularly for fewnomial systems (systems of polynomials with
few monomials~\cite{Kh91}) which possess bounds on their numbers of positive~\cite{BS} and
of real~\cite{BBS} solutions that can be far smaller than their number of complex solutions.
More fundamentally, homotopy methods do not exploit the real algebraic nature of
the problem. 

We present a numerical continuation algorithm to find all nondegenerate real solutions 
(or all positive solutions) to a system of polynomials.
This \Blue{Khovanskii-Rolle continuation algorithm}
is modeled on the derivation of fewnomial
bounds~\cite{BBS,BS} for the numbers of real solutions.
It is efficient in that the number of
paths followed depends upon the corresponding fewnomial bounds and not on the number of
complex solutions. 
This algorithm also naturally takes advantage of any slack in the fewnomial bound.
All path tracing is along real curves and except for a precomputation, the
algorithm is completely real. 

To the best of our knowledge, this is the first continuation algorithm that finds only
real solutions to a system of equations.
There are other numerical algorithms that find only real solutions.
Exclusion~\cite[\S 6.1]{SW05} recursively subdivides a
bounded domain excluding subdomains that cannot harbor solutions.
This is an active area with research into better exclusion tests~\cite{Ge01,MP} and
complexity~\cite{AEG}.
Lasserre, Laurent, and Rostalski~\cite{LLR}  recently proposed another method based on
sums-of-square and semidefinite programming. 
Cylindrical algebraic decomposition~\cite{Co75} is a symbolic method that 
computes of a detailed stratification of the ambient space in addition to the solutions.
Gr\"obner bases may be used to compute a rational univariate representation~\cite{RUR}
from which the solutions are found by substituting the roots of a univariate
polynomial into the representation.

Many existing methods have drawbacks.
Exclusion suffers from the curse of dimensionality:  the growth in the number of cells may
be exponential in the number of variables.  
Cylindrical algebraic decomposition is infeasible in 
dimensions above 3 due to the complexity of the data structures involved.  
Gr\"obner basis algorithms essentially compute all complex solutions and much more
in addition to the real solutions.

Our algorithm is based on Khovanskii's generalization of Rolle's Theorem~\cite{Kh91} and
the proof of the fewnomial bound~\cite{BBS,BS}, and we are indebted to Bernd 
Sturmfels who observed that this proof leads to a numerical algorithm to compute real
solutions to systems of polynomials.
This algorithm does not directly solve the given polynomial system but rather an
equivalent (Gale dual~\cite{BS_Gale}) system consisting of master functions in the
complement of an arrangement of hyperplanes.
If the original system involves $n$ polynomials in $n$ variables having a total of 
$l{+}n{+}1$ monomials, then the Gale dual system consists of $l$ master functions in the 
complement of an arrangement of $l{+}n{+}1$ hyperplanes in $\R^l$.
Gale duality~\cite{BS_Gale} asserts that the two systems are equivalent and produces an
algebraic bijection between complex solutions, which restricts to a bijection between
their real solutions, and further restricts to a bijection between their (appropriately
defined) positive solutions. 

Here, we describe the Khovanskii-Rolle algorithm and a  proof-of-concept 
implementation.  This implementation uses the system Bertini~\cite{BHSW06} for 
precomputation, but it is otherwise implemented in Maple.
It is also restricted to the case of finding positive solutions when $l=2$.
Detailed examples are on our webpage~\cite{KRWeb}, which also contains our software and a
brief guide.
We plan significant further work on this algorithm,
including source code and compiled binaries of an implementation 
that places no restrictions on $l$ and links to numerical homotopy continuation
software~\cite{BHSW06,HOM4PS,V99} for the precomputation.
We also plan theoretical work on the technical challenges posed by this
algorithm.

As the algorithm does not solve the original system, but rather an equivalent system in a
different number of variables, we first explain that transformation, together with an
example and an important reduction in Section~\ref{S:Background}. 
We also discuss curve tracing and homotopy continuation in Section~\ref{S:Background}.  
In Section~\ref{S:Algorithm}, we explain our algorithm, give a proof of correctness,
and examine its complexity.
We illustrate the Khovanskii-Rolle algorithm on an example in Section~\ref{S:examples} 
and describe the performance of our implementation on this example and on one that is 
a bit more extreme.
Surprisingly, our simple maple implementation for finding positive solutions outperforms
both Bertini~\cite{BHSW06} and PHCpack~\cite{V99}.
In Section~\ref{S:Implementation}, we describe our implementation, and 
we end the paper with a discussion of future work.  Please note that 
we intend ``nondegenerate real solutions'' when we say ``real solutions'' 
throughout, unless otherwise noted.

%
\section{Background}\label{S:Background}
%

We first explain how Gale duality transforms a system of polynomial equations into an
equivalent system of rational master functions.
This reduces the problem of finding all positive solutions to the polynomial equations to finding
solutions to the master functions in a polyhedron, which we may assume is bounded.
Then we discuss general curve-tracing algorithms and
finally outline the method of homotopy continuation to highlight how it differs from
Khovanskii-Rolle continuation.

%
\subsection{Gale Duality}
%
The Khovanskii-Rolle algorithm does not in fact directly solve polynomial systems, but
rather solves systems of master functions defined in the complement of an arrangement
of hyperplanes.
Solutions of the system of master functions correspond to solutions of the original system
via Gale duality~\cite{BS_Gale}.
We begin with an example.

\begin{example}\label{Ex:Gale}
The system of Laurent polynomials (written in diagonal form)
%
 \begin{equation}\label{Eq:Ex_poly}
  \begin{array}{rclcrcl}
   cd            &=&             \frac{1}{2}be^2 + 2a^{-1}b^{-1}e-1  &\quad&
   cd^{-1}e^{-1} &=& \frac{1}{2}(1 + \frac{1}{4}be^2     - a^{-1}b^{-1}e)\\
   bc^{-1}e^{-2} &=& \frac{1}{4}(6 - \frac{1}{4}be^2     - 3a^{-1}b^{-1}e) 
    &\quad&\rule{0pt}{14pt}
   bc^{-2}e      &=& \frac{1}{2}(8 - \frac{3}{4}be^2   - 2a^{-1}b^{-1}e)\\
   ab^{-1}       &=&             3 - \frac{1}{2}be^2   + a^{-1}b^{-1}e\ ,\rule{0pt}{14pt}
  \end{array}  
 \end{equation}
%
%
%
has as support (set of exponent vectors) the columns of the matrix
\[
  \Blue{\calA}\ :=\ \left(
  \begin{array}{rrrrrrr}
    -1 & 0 & 0 &  0 &  0 &  0 &  1\\
    -1 & 0 & 1 &  0 &  1 &  1 & -1\\
     0 & 1 & 0 &  1 & -1 & -2 &  0\\
     0 & 1 & 0 & -1 &  0 &  0 &  0\\
     1 & 0 & 2 & -1 & -2 &  1 &  0
  \end{array}\right)\ .
\]
Since 
\[
  \calA\left(
  \begin{array}{rrrrrrr}
    -1& 1& -2& 1& -2& 2& -1\\
     1& 6& -3& 6& -2& 7&  1     
  \end{array}\right) ^T
  \ =\ \left(
  \begin{array}{rr} 0&0\\  0&0\\  0&0\\  0&0\\  0&0  \end{array}\right)\ ,
\]
we have the following identity on the monomials
 \begin{equation}\label{Eq:heptagon_monoms}
  \begin{array}{rcl}
    (a^{-1}b^{-1}e)^{-1}(cd)^1(be^2)^{-2}(cd^{-1}e^{-1})^1(bc^{-1}e^{-2})^{-2}
      (bc^{-2}e)^2(ab^{-1})^{-1}
      &=& 1\,,\\
    (a^{-1}b^{-1}e)^1\hspace{6.7pt}(cd)^6(be^2)^{-3}(cd^{-1}e^{-1})^6(bc^{-1}e^{-2})^{-2}
     (bc^{-2}e)^7(ab^{-1})^{1}\hspace{6.7pt}
     &=& 1\,.  \rule{0pt}{15pt}
  \end{array}
 \end{equation}

A Gale system dual to~\eqref{Eq:Ex_poly} is obtained from the
identity~\eqref{Eq:heptagon_monoms} by first substituting $x$ for the term
$\frac{1}{4}be^2$ and $y$ for $a^{-1}b^{-1}e$ in~\eqref{Eq:Ex_poly} to obtain 
\[
  \begin{array}{rclcrcl}
   cd            &=&             2x + 2y-1  &\quad&
   cd^{-1}e^{-1} &=& \frac{1}{2}(1 +  x - y)\\
   bc^{-1}e^{-2} &=& \frac{1}{4}(6 -  x - 3y) &\quad&\rule{0pt}{14pt}
   bc^{-2}e      &=& \frac{1}{2}(8 - 3x - 2y)\\
   ab^{-1}       &=&             3 - 2x  + y\ ,\rule{0pt}{14pt}
  \end{array} 
\]
Then, we substitute these linear polynomials for the monomials
in~\eqref{Eq:heptagon_monoms} to obtain the system of rational master functions
 \begin{equation}\label{Eq:first_master}
  \begin{array}{rcl}
    y^{-1}(2x{+}2y{-}1)\ (4x)^{-2}\Bigl(\frac{1 +  x - y}{2}\Bigr)\ 
    \Bigl(\frac{6 -  x - 3y}{4}\Bigr)^{\!-2}
    \Bigl(\frac{8 - 3x - 2y}{2}\Bigr)^{\!2}(3 {-} 2x {+} y)^{-1}
      &=& 1\,,\\
     y(2x{+}2y{-}1)^6(4x)^{-3}\Bigl(\frac{1 +  x - y}{2}\Bigr)^{\!6}
     \Bigl(\frac{6 -  x - 3y}{4}\Bigr)^{\!-2}
     \Bigl(\frac{8 - 3x - 2y}{2}\Bigr)^{\!7}(3 {-} 2x{+} y)\quad \rule{0pt}{20pt}
    &=& 1\,.
  \end{array}
 \end{equation}
 If we solve these for $0$, they become $\Brown{f}=\Blue{g}=0$, where 
 \begin{equation}\label{Eq:Mfg}
  \begin{array}{rcl}
   \Brown{f}&:=& 
    \Brown{ (2x{+}2y{-}1)(1{+}x{-}y)(8{-}3x{-}2y)^2
    \ -\ 8yx^2(6{-}x{-}3y)^2(3{-}2x{+}y)}\,,\\ 
   \Blue{g}&:=&
  \Blue{   y(2x{+}2y{-}1)^6(1{+}x{-}y)^6   (8{-}3x{-}2y)^7(3{-}2x{+}y)
    \ -\ 32768\, x^3(6{-}x{-}3y)^2 }\,.
\rule{0pt}{18pt} 
  \end{array}
 \end{equation}
%

Figure~\ref{F:Gale_pic1} shows the curves defined by $f$ and $g$ and the 
lines given by the linear factors in $\Brown{f}$ and $\Blue{g}$.
The curve $\Brown{f}=0$ has the three branches indicated and the other arcs belong to
$\Blue{g}=0$. 
\begin{figure}[htb]
 \[
   \begin{picture}(310,185)(-73,-20)
    \put(0,0){\includegraphics[height=160pt]{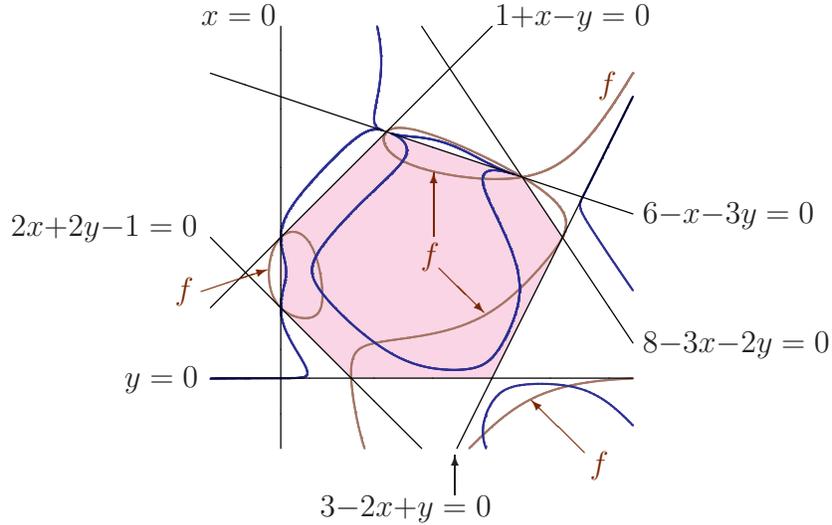}}
    \put(-13,57){$\Brown{f}$} \put(-3,60){\Brown{\vector(3,1){25}}}
    \put(80, 70){$\Brown{f}$}\put(87,69){\Brown{\vector(1,-1){17}}}
      \put(85,81){\Brown{\vector(0,1){24}}}
    \put(147, 135){$\Brown{f}$} 
     \put(144,-9){$\Brown{f}$} \put(142,-1){\Brown{\vector(-1,1){20}}}

\put(-32, 24){$y=0$}
\put(-75, 81){$2x{+}2y{-}1=0$}
\put( -3,161){$x=0$}
\put(108,161){$1{+}x{-}y=0$}
\put(164, 86){$6{-}x{-}3y=0$}
\put(164, 37){$8{-}3x{-}2y=0$}
\put( 42,-25){$3{-}2x{+}y=0$}\put(93,-17){\vector(0,1){15}}


   \end{picture}
 \]
\caption{Curves and lines}\label{F:Gale_pic1}
\end{figure}

It is clear that the solutions to $\Brown{f}=\Blue{g}=0$ in the complement of the lines
are consequences of solutions to~\eqref{Eq:Ex_poly}.
More, however, is true.
The two systems define isomorphic schemes as complex or as real varieties, with the
positive solutions to~\eqref{Eq:Ex_poly} corresponding to the solutions
of $\Brown{f}=\Blue{g}=0$ lying in the central heptagon.
Indeed, the polynomial system~\eqref{Eq:Ex_poly} has 102 solutions in $(\C^\times)^5$.
This may be verified with either symbolic software such as Singular~\cite{GPS05},
or with numerical solvers.
Ten of these solutions are real, with six positive.
We computed them using both Bertini~\cite{BHSW06} and PHCpack~\cite{V99} (which produce 
the same solutions) and display them (in coordinates $(a,b,c,d,e)$) to 3
significant digits. 
 \begin{equation}\label{Eq:Sols_abcde}
  \begin{array}{rl}
   (-8.92,   -1.97, -0.690,  3.98, -1.28)\,,&
   (-0.0311,  8.52, -1.26, -14.1,  -1.39)\,,\\
   (-0.945,   3.41,  1.40,   0.762, 1.30)\,,&\rule{0pt}{13pt}
   (-5.21,    4.57,  2.48,   1.20,  1.34)\,,\\
   ( 2.19,    0.652, 0.540,  2.27 , 1.24)\,,&\rule{0pt}{13pt}
   ( 2.45,    0.815, 0.576,  1.38,  1.20)\,,\\
   ( 3.13,    1.64,  0.874,  0.962, 1.28)\,,&\rule{0pt}{13pt}
   ( 2.00,    0.713, 1.17,   3.28,  2.20)\,,\\
   ( 1.61,    1.03,  2.37,   1.98,  2.35)\,,&\rule{0pt}{13pt}
   ( 0.752,   3.10,  2.36,   1.55,  1.48)\,.
  \end{array}
 \end{equation}
%
%

For the system  $\Brown{f}=\Blue{g}=0$ of master functions~\eqref{Eq:Mfg}
in the complement of the lines shown in Figure~\ref{F:Gale_pic1}, 
symbolic and numerical software finds 102 solutions with ten real solutions
and six lying in the heptagon of Figure~\ref{F:Gale_pic1}.
We give the real solutions to 3 significant digits
\[
  \begin{array}{c}
  (-0.800, -0.0726)\,,\ 
   ( 4.13,  5.26)\,,\ 
   ( 1.44, -0.402)\,,\ 
   ( 2.04, -0.0561)\,,\\
   ( 0.249,  0.864)\,,\rule{0pt}{13pt}
  ( 0.296,  0.602)\,,\ 
  ( 0.670,  0.250)\,,\ 
  ( 0.866,  1.54)\,,\ 
  ( 1.43,  1.42)\,,\ 
  ( 1.70,  0.634)\,.
 \end{array}
\]
These correspond (in order) to the solutions in the list~\eqref{Eq:Sols_abcde} under
the map $(a,b,c,d,e)\mapsto(be^2/4, ea^{-1}b^{-1})$.
Gale duality generalizes this equivalence.
\end{example}

Let $\Blue{\calA}=\{0,\alpha_1,\dotsc,\alpha_{l+n}\}\subset\Z^{n}$ be a collection of
$1{+}l{+}n$ integer vectors, which we consider to be exponents of (Laurent) monomials:
If $\alpha=(a_1,\dotsc,a_n)$, then $\Blue{x^\alpha}:=x_1^{a_1}\dotsb x_n^{a_n}$.
A polynomial with \Blue{{\sl support}} $\calA$ is a linear combination of monomials from 
$\calA$,
\[
    \Blue{f}\ :=\ \sum_{\alpha\in\calA} c_\alpha\, x^\alpha\,,
\]
where $c_\alpha\in\R$ -- our polynomials and functions are real.
Since we allow negative exponents, $f$ is defined on
the non-zero real numbers, $(\R^\times)^n$.
We consider systems of polynomials 
 \begin{equation}\label{Eq:Poly_system}
   f_1(x_1,\dotsc,x_n)\ =\ f_2(x_1,\dotsc,x_n)\ =\ 
    \dotsb\ =\ f_n(x_1,\dotsc,x_n)\ =\ 0\,,
 \end{equation}
in which each polynomial has the same support $\calA$.
We also assume that the system is general, by which we mean that it defines a finite set of
non-degenerate points in $(\R^\times)^n$, in that the differentials of the
polynomials $f_i$ are linearly independent at each solution.

This implies that the polynomials $f_i$ are
linearly independent over $\R$, in particular that the coefficient matrix of $n$ of the
monomials, say $x^{\alpha_1},\dotsc,x^{\alpha_n}$, is invertible.
We may then solve~\eqref{Eq:Poly_system} for these monomials to obtain a diagonal system
 \begin{equation}\label{Eq:diagonal}
   x^{\alpha_i}\ =\ \Blue{p_i}(x^{\alpha_{n+1}},\dotsc,x^{\alpha_{l+n}})\,,
   \qquad \mbox{for }i=1,\dotsc,n\,,
 \end{equation}
where the $p_i$ are degree 1 polynomials in their arguments.
This has the same form as~\eqref{Eq:Ex_poly}.

Let $\Blue{\calB}:=\{\beta_1,\dotsc,\beta_l\}\subset\Z^{l+n}$ be a basis for the free
Abelian group $\calA^\perp$ of integer linear relations among the vectors in $\calA$,
\[
   \Blue{\calA^\perp}\ :=\ \{(b_1,\dotsc,b_{l+n})\in\Z^{l+n}\mid
    b_1\alpha_1+\dotsb+ b_{l+n}\alpha_{l+n}\ =\ 0\}\,.
\]
If we write $\beta_j=(\beta_{j,i}\mid i=1,\dotsc,l{+}n)$, then
the monomials $x^{\alpha_i}$ satisfy
\[
   1\ =\ (x^{\alpha_1})^{\beta_{j,1}}\dotsb(x^{\alpha_{l+n}})^{\beta_{j,l+n}}\,,
   \qquad \mbox{for }j=1,\dotsc,l\,.
\]
Using the diagonal system~\eqref{Eq:diagonal} to substitute 
$p_i=p_i(x^{\alpha_{n+1}},\dotsc,x^{\alpha_{l+n}})$ for $x^{\alpha_i}$,
we obtain
\[
   1\ =\ p_1^{\beta_{j,1}}\dotsb p_n^{\beta_{j,n}} \cdot
    (x^{\alpha_{n+1}})^{\beta_{j,n+1}}\dotsb(x^{\alpha_{l+n}})^{\beta_{j,l+n}}\,,
   \qquad \mbox{for }j=1,\dotsc,l\,.
\]
If we now set $\Blue{y_i}:=x^{\alpha_{n+i}}$ and write
 $p_{n+i}(y)=y_i$ for $i=1,\dotsc,l$, then we get
 \begin{equation}\label{Eq:Master}
   1\ =\ p_1(y)^{\beta_{j,1}}\dotsb p_{l+n}(y)^{\beta_{j,l+n}}\,,
   \qquad \mbox{for }j=1,\dotsc,l\,.
 \end{equation}

These polynomials $p_i(y)$ define an arrangement \Blue{$\calH$} of hyperplanes in either
$\R^l$ or $\C^l$.
Because the exponents $\beta_{i,j}$ may be negative, the system~\eqref{Eq:Master} only
makes sense in the complement $M_\calH$ of this arrangement.
These rational functions, $p_1(y)^{\beta_{j,1}}\dotsb p_{l+n}(y)^{\beta_{j,l+n}}$, are
called \Blue{{\sl master functions}}, and they arise in the theory of hyperplane
arrangements. 
We give the main result of~\cite{BS_Gale}.

\begin{theorem}\label{T:GD}
 If the integer linear span $\Z\calA$ of the exponents $\calA$ is equal to $\Z^n$, then
 the association $x^{\alpha_{n+i}}\leftrightarrow y_i$ defines a bijection between
 solutions to the system of polynomials~$\eqref{Eq:Poly_system}$ in $(\C^\times)^n$
 and solutions to the system of master functions~$\eqref{Eq:Master}$ in $M_\calH$.

 This bijection restricts to a bijection between real solutions to both systems and
 further to a bijection between the positive solutions to~$\eqref{Eq:Poly_system}$ and the
 solutions to the system of master functions~$\eqref{Eq:Master}$ which lie in the positive
 chamber $\Delta:=\{y\in\R^l\mid p_i(y)>0,\ i=1,\dotsc,l+n\}$.
\end{theorem}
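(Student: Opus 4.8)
The plan is to realise both solution sets inside the torus $(\C^\times)^{l+n}$ as one and the same intersection of two explicit subvarieties --- a ``graph'' coming from the diagonal form of the polynomial system and a subtorus coming from the arrangement --- and then to transport this common set back to each side by an isomorphism. Set $m:=l{+}n$ and let $\phi\colon(\C^\times)^n\to(\C^\times)^m$ be the monomial map $x\mapsto(x^{\alpha_1},\dotsc,x^{\alpha_m})$; this is the map on character groups induced by $\calA\colon\Z^m\to\Z^n$, $e_k\mapsto\alpha_k$. Since $\calB$ is a basis of $\calA^\perp=\ker\calA$ and the hypothesis $\Z\calA=\Z^n$ says exactly that $\calA$ is surjective, there is a split short exact sequence $0\to\Z^l\xrightarrow{\calB^T}\Z^m\xrightarrow{\calA}\Z^n\to 0$, and freeness of $\Z^n$ gives an integer section $\sigma\colon\Z^n\to\Z^m$ with $\calA\sigma=\mathrm{id}$. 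Writing $\sigma(e_i)=\gamma_i\in\Z^m$, the Laurent-monomial map $z\mapsto x$ with $x_i:=\prod_k z_k^{(\gamma_i)_k}$ will be shown to be a two-sided inverse of $\phi$ onto $Z:=\{z\in(\C^\times)^m\mid z^{\beta_j}=1,\ j=1,\dotsc,l\}$: applying $\phi$ to this $x$ returns $z_k\cdot(z^{\delta_k})^{-1}$, where $\delta_k:=e_k-\sigma\calA(e_k)\in\calA^\perp$, and $z^{\delta_k}=1$ for every $k$ precisely when $z\in Z$; the other composite is immediate from $\calA\sigma=\mathrm{id}$. Because $\sigma$ has integer entries this inverse is given by genuine Laurent monomials, so $\phi$ restricts to bijections $(\R^\times)^n\xrightarrow{\sim}Z\cap(\R^\times)^m$ and $\R_{>0}^n\xrightarrow{\sim}Z\cap\R_{>0}^m$ as well.

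Next I would handle the arrangement side. Define $\psi\colon M_\calH\to(\C^\times)^m$ by $y\mapsto(p_1(y),\dotsc,p_m(y))$; this is well defined because $y\in M_\calH$ means every $p_i(y)\ne 0$, it is injective because its last $l$ coordinates are $y_1,\dotsc,y_l$ (recall $p_{n+i}(y)=y_i$), and its image is the graph $V:=\{z\in(\C^\times)^m\mid z_i=p_i(z_{n+1},\dotsc,z_{n+l}),\ i=1,\dotsc,n\}$. The same map restricts to real points, and $\psi(\Delta)=V\cap\R_{>0}^m$ since $y\in\Delta$ exactly when all $p_i(y)>0$. Now assemble: by the invertible row reduction producing the diagonal form \eqref{Eq:diagonal} (legitimate because the genericity hypothesis forces the coefficient matrix of $x^{\alpha_1},\dotsc,x^{\alpha_n}$ in $f_1,\dotsc,f_n$ to be invertible), a point $x\in(\C^\times)^n$ solves \eqref{Eq:Poly_system} iff $\phi(x)\in V$; and $\phi(x)\in Z$ always, since $\phi(x)^{\beta_j}=x^{\sum_k\beta_{j,k}\alpha_k}=x^0=1$. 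Hence $\phi$ carries the solutions of \eqref{Eq:Poly_system} bijectively onto $Z\cap V$. Symmetrically, $y\in M_\calH$ solves \eqref{Eq:Master} iff $\psi(y)^{\beta_j}=1$ for all $j$, i.e.\ $\psi(y)\in Z$, while $\psi(y)\in V$ always; so $\psi$ carries the solutions of \eqref{Eq:Master} in $M_\calH$ bijectively onto the same set $Z\cap V$. Composing gives the bijection $x\mapsto\psi^{-1}(\phi(x))$, and since $\psi^{-1}$ reads off the last $l$ coordinates it sends $x$ to $y$ with $y_i=x^{\alpha_{n+i}}$, which is the claimed association $x^{\alpha_{n+i}}\leftrightarrow y_i$. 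The real statement follows by running the same argument with $(\R^\times)^m$ in place of $(\C^\times)^m$, and the positive statement by intersecting further with $\R_{>0}^m$ and using $\phi(\R_{>0}^n)=Z\cap\R_{>0}^m$ together with $\psi(\Delta)=V\cap\R_{>0}^m$.

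I expect the main obstacle to be the real case, specifically showing $\phi$ maps $(\R^\times)^n$ \emph{onto} $Z\cap(\R^\times)^m$ and not merely into it: on sign vectors $\phi$ is the $\mathbb{F}_2$-linear map induced by $\calA\bmod 2$, so a naive approach runs into $2$-torsion. Using the explicit integer section $\sigma$ sidesteps this entirely --- the inverse is a Laurent monomial with integer exponents and hence is defined over $\R^\times$ --- and this is exactly the point at which the hypothesis $\Z\calA=\Z^n$ (rather than merely $\Z\calA$ of finite index) is indispensable; with only finite index the inverse would require extracting roots and the bijection would persist only over $\R_{>0}$. The remaining points are routine bookkeeping: verifying that \eqref{Eq:diagonal} is equivalent to \eqref{Eq:Poly_system} on $(\C^\times)^n$, that $M_\calH$ is precisely the locus where all $p_i$ are nonzero so that $\psi$ is defined there, and that $\phi$ and $\psi$ are isomorphisms (so the bijection automatically respects nondegeneracy).
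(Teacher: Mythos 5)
Your argument is correct, and it is in substance the same proof as the source: the paper states Theorem~\ref{T:GD} without proof, quoting it as the main result of~\cite{BS_Gale}, and the argument there likewise identifies both solution sets with the intersection, inside $(\C^\times)^{l+n}$, of the subtorus $\{z\mid z^{\beta_j}=1,\ j=1,\dotsc,l\}$ with the affine piece $\{z\mid z_i=p_i(z_{n+1},\dotsc,z_{n+l}),\ i=1,\dotsc,n\}$, using the hypothesis $\Z\calA=\Z^n$ exactly as you do, to make the monomial map $x\mapsto(x^{\alpha_1},\dotsc,x^{\alpha_{l+n}})$ an isomorphism onto that subtorus with a Laurent-monomial inverse, hence compatible with $(\R^\times)^n$ and $\R_{>0}^n$. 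Your verification of the two compositions via an integer section $\sigma$ of $\calA$, and the observation that $\psi^{-1}$ reads off the last $l$ coordinates so the bijection is the stated association $x^{\alpha_{n+i}}\leftrightarrow y_i$, fill in all the needed details, so no gap remains.
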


The bijections of Theorem~\ref{T:GD} respect the multiplicities of the solutions.
Because of this equivalence between the two systems, and because the exponents $\calA$
and $\calB$ come from Gale dual vector configurations, we call the
system~\eqref{Eq:Master} of master functions \Blue{{\sl Gale dual}} to the original polynomial
system.

Theorem~\ref{T:GD} may be strengthened as follows.
There still is a bijection between real solutions if we only require that $\Z\calA$ have
odd index in $\Z^n$ and that the span of $\{\beta_1,\dotsc,\beta_l\}$ have odd index in
$\calA^\perp$.
The bijection between positive solutions in $\R^n$ and solutions in the positive chamber
$\Delta$ in $\R^l$ remains valid if we allow $\calA$ and $\beta_i$ to be any real vectors 
such that $\calA$ spans $\R^n$ and $\{\beta_1,\dotsc,\beta_l\}$ is a basis of the real
vector space $\calA^\perp$.
The Khovanskii-Rolle continuation algorithm is naturally formulated in terms of real
exponents $\beta_i$.

%
\subsubsection{Master functions on bounded polyhedra}
Before we discuss the numerical algorithms of path-following and homotopy continuation, we
show that it is no loss to assume that this positive chamber $\Delta$ is bounded.
Suppose that $p_1(y),\dotsc,p_{l+n}(y)$ are degree 1 polynomials in $y\in\R^l$ that span
the linear space of degree 1 polynomials, let $(\beta_{i,j})\in\R^{(l+n)\times l}$ be an
array of real numbers, and consider the system of master functions
 \begin{equation}\label{Eq:MFS}
   \prod_{i=1}^{l+n} p_i(y)^{\beta_{i,j}}\ =\ 1
    \qquad\mbox{for}\quad j=1,\dotsc,l\,,
 \end{equation}
in the polyhedron
\[
   \Delta\ :=\ \{y\in\R^l\mid p_i(y)>0,\ i=1,\dotsc,l+n\}\,.
\]

Suppose that $\Delta$ is unbounded.
By our assumption on $p_1,\dotsc,p_{l+n}$, $\Delta$ is strictly convex and therefore has a
bounded face, $F$.
Then there is a degree 1 polynomial, $\Blue{q}(y):=a_0+ \alpha\cdot y$ where
$\alpha\in\R^l$ and $a_0>0$,
that is strictly positive on $\Delta$, and such that $F$ is
the set of points of $\Delta$ where $q(y)$ achieves its minimum value on $\Delta$.
Dividing by $a_0$, we may assume that the constant term of $q(y)$ is 1.

Consider the projective coordinate change $y\Rightarrow \oy$, where 
 \begin{equation}\label{Eq:PCC}
   \Blue{\oy_j}\ :=\ y_j/q(y)\qquad\mbox{for }j=1,\dotsc,l\,.
 \end{equation}
Then we invite the reader to check that 
\[
   y_j\ =\ \oy_j/r(\oy)\qquad\mbox{for }j=1,\dotsc,l\,,
\]
where $\Blue{r(\oy)}:=1-\alpha\cdot \oy$.
Note that $q(y)r(\oy)=1$.
  
The coordinate change~\eqref{Eq:PCC} manifests itself on degree 1 polynomials as follows. 
If $p(y)$ is a degree 1 polynomial, then
 \begin{equation}
   p(y)\ =\ \op(\oy)/r(\oy)
   \qquad\mbox{and}\qquad
   \op(\oy)\ =\ p(y)/q(y)\,,
 \end{equation}
where $\Blue{\op(\oy)}:=p(\oy)- p(0)\alpha\cdot \oy$.
Under the projective transformation~\eqref{Eq:PCC}, the polyhedron $\Delta$ is transformed into
$\oD$, where
\[
   \Blue{\oD}\ :=\ \{\oy\in\R^\ell\mid 
    r(\oy)>0\ \mbox{ and }\ \op_i(\oy)>0\quad\mbox{for }i=1,\dotsc,l+n\}\,.
\]

\begin{proposition}
  Under the coordinate change~\eqref{Eq:PCC}, the system of master functions~\eqref{Eq:MFS} on
  $\Delta$ is transformed into the system
\[
   \prod_{i=0}^{l+n} \op_i(\oy)^{\beta_{i,j}}\ =\ 1
    \qquad\mbox{for}\quad j=1,\dotsc,l\,,
\]
 on the polyhedron $\oD$, where $\Blue{\op_0}(\oy)=\oq(\oy)$ and 
 $\Blue{\beta_{0j}}:=-\sum_{i=1}^{l+n}\beta_{i,j}$.
 Furthermore, $\oD$ is bounded.
\end{proposition}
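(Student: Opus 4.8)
The plan is to check the two claims separately: that the substitution $y_j=\oy_j/r(\oy)$ turns \eqref{Eq:MFS} into the displayed system on $\oD$, and that $\oD$ is bounded. For the first, I would substitute the identity $p_i(y)=\op_i(\oy)/r(\oy)$ (established just above the proposition) into each equation of \eqref{Eq:MFS}; the common factor $r(\oy)$ pulls out of the $i$th product with total exponent $\sum_{i=1}^{l+n}\beta_{i,j}$, so the $j$th equation becomes
\[
   \prod_{i=1}^{l+n}\op_i(\oy)^{\beta_{i,j}}\ =\ r(\oy)^{\sum_{i=1}^{l+n}\beta_{i,j}}\ =\ r(\oy)^{-\beta_{0j}}\,,
\]
by the definition $\beta_{0j}=-\sum_{i=1}^{l+n}\beta_{i,j}$. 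Writing $\op_0(\oy):=r(\oy)$ (the polynomial the statement denotes $\oq(\oy)$, consistent with the defining inequality $r(\oy)>0$ of $\oD$) and moving it to the left gives $\prod_{i=0}^{l+n}\op_i(\oy)^{\beta_{i,j}}=1$. That \eqref{Eq:PCC} carries $\Delta$ bijectively onto $\oD$ is routine from the coordinate-change formulas: if $y\in\Delta$ then $q(y)>0$, hence $r(\oy)=1/q(y)>0$ and $\op_i(\oy)=p_i(y)/q(y)>0$ for each $i$, so $\oy\in\oD$, and $\oy\mapsto\oy/r(\oy)$ inverts this. So the real content is the boundedness of $\oD$.

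For that, the key observation is that every nonzero recession direction $d$ of $\Delta$ satisfies $\alpha\cdot d>0$. Since $p_1,\dotsc,p_{l+n}$ span the space of degree 1 polynomials, their linear parts span $(\R^l)^*$, so $\Delta$ contains no line; being line-free it is pointed, which is why it has a bounded face $F$, and by construction $q$ attains its minimum over $\Delta$ precisely along $F$. For $y\in\Delta$ and a recession direction $d$ we have $q(y+td)=q(y)+t\,\alpha\cdot d$ for $t\ge0$, so positivity of $q$ on $\Delta$ forces $\alpha\cdot d\ge0$; if $\alpha\cdot d=0$ then $q$ is constant in the direction $d$, so from a point of $F$ one would stay in the minimizing face, contradicting that $F$ is bounded. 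Hence $\alpha\cdot d>0$ for every nonzero $d$ in the recession cone of $\Delta$.

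Finally I would bound $|\oy|=|y|/q(y)$ on $\Delta$. If $\oD$ were unbounded there would be $y^{(k)}\in\Delta$ with $|\oy^{(k)}|\to\infty$; since $q\ge\min_\Delta q>0$ on $\Delta$ this forces $|y^{(k)}|\to\infty$, so after passing to a subsequence $y^{(k)}/|y^{(k)}|\to d$ with $|d|=1$ and $d$ in the recession cone of $\Delta$, whence $\alpha\cdot d>0$ and
\[
   |\oy^{(k)}|\ =\ \frac{|y^{(k)}|}{1+\alpha\cdot y^{(k)}}\ \longrightarrow\ \frac{1}{\alpha\cdot d}\ <\ \infty\,,
\]
a contradiction. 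I expect the boundedness step to be the main obstacle: producing the bounded face $F$ together with a functional $q$ that is simultaneously minimized on $F$ and strictly positive on $\Delta$ is exactly where the spanning hypothesis on $p_1,\dotsc,p_{l+n}$ enters, and converting ``$\alpha\cdot d>0$ on the recession cone'' into a genuine bound on $\oD$ needs the short compactness argument above rather than a one-line estimate.
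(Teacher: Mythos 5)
Your argument for the first assertion coincides with the paper's: substitute $p_i(y)=\op_i(\oy)/r(\oy)$, collect the factor $r(\oy)^{\sum_i\beta_{i,j}}$, and absorb it as $\op_0^{\beta_{0j}}$ with $\op_0=r$ (you are right that $r$, not the literal $\oq$ of the statement, is the function that must play the role of $\op_0$; the paper's own proof does exactly this). Where you genuinely diverge is the boundedness of $\oD$. The paper writes $\Delta=P+R$ (bounded polytope plus recession cone), notes that $q$ is minimized on $R$ at the origin so $\alpha\cdot v>0$ for $0\neq v\in R$, and then produces explicit constants: $\pi$ bounding $\|y\|/q(y)$ on $P$ and $\rho$ with $\alpha\cdot v\geq\|v\|/\rho$ on $R$ (via the finitely many extreme rays), giving the quantitative bound $\|\oy\|\leq\pi+\rho$. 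You instead prove the same key inequality $\alpha\cdot d>0$ on the recession cone directly (positivity of $q$ gives $\geq 0$, boundedness of the minimizing face $F$ rules out equality) and then conclude by a sequential compactness/contradiction argument: an unbounded sequence in $\oD$ forces $|y^{(k)}|\to\infty$, its normalized limit $d$ lies in the recession cone, and $|\oy^{(k)}|\to 1/(\alpha\cdot d)<\infty$. Both arguments hinge on strict positivity of $\alpha$ on recession directions; yours is shorter and avoids the Minkowski decomposition and the extreme-ray constant, at the cost of being non-constructive (no explicit bound on $\oD$, which the paper's $\pi+\rho$ provides and which could matter for an implementation). Your additional verification that \eqref{Eq:PCC} carries $\Delta$ bijectively onto $\oD$ is a point the paper asserts without proof, and your justification of the existence of the bounded face from the spanning hypothesis is at least as careful as the paper's. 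No gaps.
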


\begin{proof}
 If $y\in\Delta$ and $\oy\in\oD$ are related by the coordinate
 transformation~\eqref{Eq:PCC}, then $p_i(y)=\op_i(\oy)/r(\oy)$.
 Thus 
\[
  \prod_{i=1}^{l+n} p_i(y)^{\beta_{i,j}}\ =\ 
  \left(\prod_{i=1}^{l+n} \op_i(\oy)^{\beta_{i,j}}\right)\Big/r(\oy)^{\sum_i\beta_{i,j}}
  \ =\ \prod_{i=0}^{l+n} \op_i(\oy)^{\beta_{i,j}}\ ,
\]
 where $\op_0(\oy)=r(\oy)$ and $\beta_{0j}=-\sum_i \beta_{i,j}$.
 This proves the first statement.\smallskip

 All that remains is to show that $\oD$ is bounded.
 The polyhedron $\Delta$ is the Minkowski sum
 \begin{equation}\label{Eq:Rec_Cone}
   \Delta\ =\ P\ +\ R\,,
 \end{equation}
 where $P$ is a bounded polytope and $R$ is a strictly convex polyhedral cone with vertex
 the origin.
 The bounded faces of $\Delta$ are faces of $P$.
 In particular, the bounded face $F$ consisting of points where $q$ achieves its minimum
 on $\Delta$ is also a face of $P$.
 Additionally, $q$ achieves it minimum value on $R$ at the origin.
 In particular, if $0\neq v\in R$, then 
 $\alpha\cdot v=q(v)-q(0)>0$.

 Let $\pi$ be an upper bound for the ratio $\|y\|/q(y)$ for $y\in P$,
 which bounds $\|\oy\|$ when $\oy$ corresponds to a point in $P$.
 Letting $v$ range over a set of generators of the (finitely many) extreme rays of $R$
 shows that there is a positive constant $\rho$ such that 
\[
    \alpha\cdot v\ \geq\ \|v\|/\rho\qquad\mbox{for}\quad v\in R\,.
\]

 Let $\oy\in\oD$.
 Then $y:=\oy/r(\oy)$ is the corresponding point of $\Delta$.
 Writing $y=y_P+y_R$ where $y_P\in P$ and $y_R\in R$, then $q(y)=q(y_P)+\alpha\cdot y_R$
 with both terms non-negative, and we have
\[
  \|\oy\|\ =\ \left\| \frac{y}{q(y)}\right\|
  \ \leq\ \frac{\|y_P\|}{q(y)}+\frac{\|y_R\|}{q(y)}
  \ \stackrel{!}{\leq}\ 
  \frac{\|y_P\|}{q(y_P)}+\frac{\|y_R\|}{\alpha\cdot y_R}
  \ \leq\ \pi+\rho\ .
\]
 This shows that $\oD$ is bounded.
\end{proof}

We remark that this coordinate transformation $y\mapsto \overline{y}$ is a concrete and
explicit projective transformation transforming the unbounded polyhedron $\Delta$ into a
bounded polyhedron $\overline{\Delta}$.

%
\subsection{Curve tracing}\label{S:Cur_trac}
%

Let $g\colon \R^n\to\R^{n-1}$ be a smooth function such that $g^{-1}(0)$ is a smooth curve $C$.
Given a point $c\in\R^n$ on or near $C$, to move along the arc of $C$ near $c$ is
to produce a series of approximations to points along this arc.
Predictor-corrector methods~\cite{AG90}
are the standard general numerical method for this task.
They proceed as follows:
\smallskip

\noindent{\bf \underline{I Prediction}.}
  Move some specified distance (the \Blue{{\sl steplength}}) $\Delta t$ along 
  the tangent line to $C$ at $c$. 
  (The tangent direction is the kernel of the Jacobian matrix of $g$ at $C$.)
  That is, if $\widetilde{T}$ is the unit tangent vector to $C$ 
  pointing in the appropriate direction, set $\Blue{c_p}:=c+\Delta t\cdot\widetilde{T}$.\smallskip 

\noindent{\bf\underline{II Correction}.}
 Choose a linear equation $L(z)=0$ vanishing at $c_p$ and transverse to $\widetilde{T}$,
 for example $L(z):= \widetilde{T}\cdot(z-c_p)$.
 If $\Delta t$ is sufficiently small, then $c_p$ is close to a point on $C$
 where $L=0$.
 Adding $L$ to $g$ gives a system $\widetilde{g}$ with $c_p$ close to a solution, and Newton
 iterations starting at $c_p$ for the system $\widetilde{g}$ approximate
 this nearby point on $C$ with $L=0$.

 Since $C$ is smooth and $\Delta t$ small, the Jacobian matrix of $\widetilde{g}$ 
 has full rank in a cylinder around $C$ near the point $c$ and $c_p$ lies in that cylinder.
 This procedure will produce a sequence of approximations 
 $c_p,c_1, \ldots, c_k$ to a point on $C$.\smallskip

\noindent{\bf\underline{III Adaptive Steplength}.}
If correction fails to converge after a preset number (say $k=2$) of iterations, then
reduce $\Delta t$ by some factor, say $\frac{1}{2}$, and return to Step I.
Otherwise, if several consecutive predictor-corrector steps have succeeded, 
increase $\Delta t$ by some factor, say 2.  In this way,  mild curves 
are tracked quickly and wild curves are tracked cautiously.
The entire predictor-corrector procedure is terminated as a failure if 
$\Delta t$ falls below a threshold which is a function of the user's patience 
and the available numerical precision.\smallskip

\noindent{\bf\underline{IV Resetting or Stopping}.}
  If the step was a success, decide whether 
to stop, based on stopping criteria specific to the problem being solved. 
If it is not time to stop, set $c=c_k$ and go to Step I.  If the 
step was a failure, leave $c$ as is and go to Step I.
\medskip

If another branch of the curve $C$ is near the arc we are tracing and if the steplength
$\Delta t$ is large enough, then $c_p$ may fall in the basin of convergence of the
other branch.
The only general remedy for this \Blue{{\sl curve-jumping}} is to restrict both the 
maximum allowed steplength $\Delta t$ and the number of Newton corrections, typically to
2. 
Even then, curve-jumping may occur, only to be discovered with problem-specific checking 
procedures as described in Section~\ref{S:Implementation}.

There are many possible variations.
For example, the tangent predictor could be replaced (after one or more initial steps) with  
an interpolating predictor.  
For more details, see~\cite{AG90}.

%
\subsection{Homotopy continuation}\label{S:Hom_cont}
%

Predictor-corrector methods are the method of choice for homotopy 
continuation.  
They involve tracking a curve with a specific structure 
defined in a product of $\mathbb C^n$ (the space of variables, treated as $\R^{2n}$) and  
$\mathbb C$ (a parameter space).  

For the purposes of our discussion, let 
\[
   f(z)\ =\ 0\,, \text{ with }f\ \colon\ \C^n\to \C^n\,,
\]
be the \Blue{{\sl target polynomial system}} for which the isolated 
complex solutions are sought.  
Based upon the structure of the system $f$, a \Blue{{\sl start system}} 
\[
   g(z), \text{ with }g\ \colon\ \C^n\ \longrightarrow\ \C^n\,,
\]
is chosen and the \Blue{{\sl homotopy}} 
\[
   \Blue{H(z,t)}\ :=\ f(z)\cdot (1-t)+\gamma\cdot t\cdot g(z), 
   \text{ with }H\ \colon\ \mathbb C^n\times \mathbb C\to \mathbb C^n\,,
\]
is formed. 
Here, $0\neq\gamma\in\C$ is a randomly chosen complex number.
The start system $g(z)$ is chosen so that it is 
easily solved and so that its structure is related to that of $f$.
 
At $t=1$, we know the solutions of $H(z,1)=\gamma g(z)$, and we seek the 
solutions of $H(z,0)=f(z)$.
For any given value of $t$, $H(z,t)$ is a polynomial system with some number 
$N_t$ of isolated solutions.  
This number is constant, say $N$, for all except finitely many $t\in \mathbb C$.
For a general $\gamma$, restricting $H(z,t)$ to the interval $(0,1]$ gives
$N$ real arcs, one for each solution to $g(z)=0$ when $t=1$.

Some arcs will extend to $t=0$ and their endpoints give all isolated solutions of
the target system $f(z)=0$.  
Some may diverge (wasting computational time), and 
two or more arcs will converge to each singular solution.
Different choices of start system $g(z)=0$ will have different numbers of the extraneous 
divergent arcs.
This is illustrated in Figure~\ref{F:homotopy_arcs}.
\begin{figure}[htb]
\[
  \begin{picture}(121,135)(0,-25)
   \put(0,0){\includegraphics{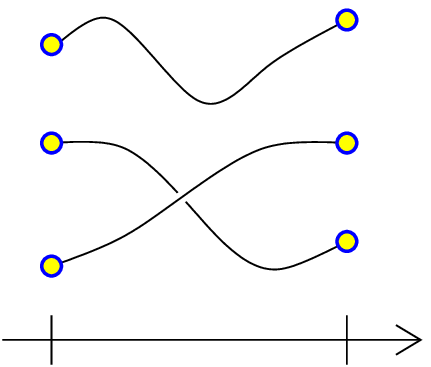}}
   \put(12,-11){$0$}   \put(97,-11){$1$}
   \put(108,14){$t$}
   \put(8,-27){No extraneous arcs}
  \end{picture}
  \qquad
  \begin{picture}(121,135)(0,-25)
   \put(0,0){\includegraphics{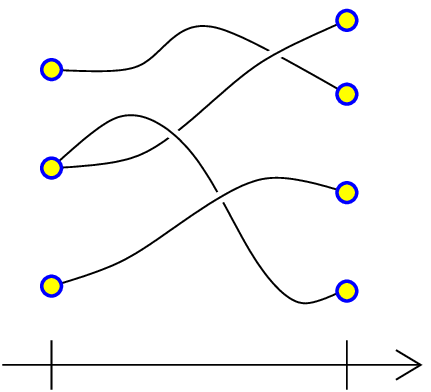}}
   \put(12,-11){$0$}   \put(97,-11){$1$}
   \put(25,-27){Singular arcs}
   \put(108,14){$t$}
  \end{picture}
  \qquad
  \begin{picture}(121,135)(0,-25)
   \put(0,0){\includegraphics{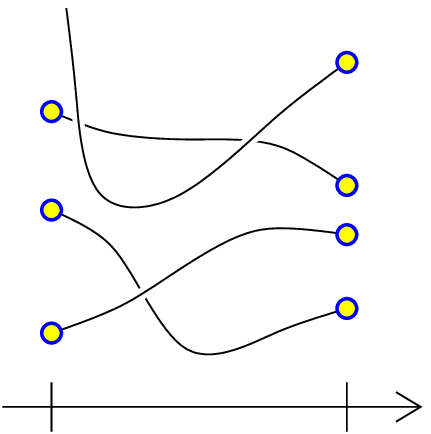}}
   \put(12,-11){$0$}   \put(97,-11){$1$}
   \put(25,-27){Divergent arc}
   \put(108,14){$t$}
  \end{picture}
\]
%
\caption{Arcs in a homotopy}
\label{F:homotopy_arcs}
\end{figure}

Here, we {\sl track paths} rather than {\sl trace curves}.
The predictor step is the same as in curve-tracing, but rather than correct 
along a line normal to the tangent, we freeze the value of $t$ and correct 
back to the curve for that value of $t$.  
For more details, see~\cite{SW05}.

%
\subsection{Curve-tracing versus path-tracking}
%

The primary difference between path-tracking in homotopy continuation and 
general curve-tracing as described in Section~\ref{S:Cur_trac} 
is that in curve-tracing there are only application-specific 
safeguards against curve-jumping.  
Homotopy continuation has a well-developed theory to avoid path-crossing.  
For almost all $\gamma$ (those that do not satisfy a certain algebraic relation), 
all paths will remain distinct until $t=0$.
As a result, all paths may be tracked to some small value of $t$ 
and compared to check for path-crossing.

A second difference involves methods to follow ill-conditioned curves.
While the curves to be tracked are generally non-singular, 
they are not necessarily well-conditioned.
These algorithms involve repeatedly solving linear systems $Ax=b$, typically 
when $A$ is some Jacobian, and the efficacy of this step depends upon the
condition number $\kappa(A)$ of $A$.
Wilkinson~\cite{Wilk} showed that we can expect 
no more than $P-\log(\kappa(A))$ digits of accuracy when solving $Ax=b$ with 
$P$ digits of precision.
For homotopy continuation, adaptive precision techniques can overcome these 
potential losses in accuracy~\cite{BHSW_AMP}.  
While this could likely be extended to general curve-tracing, the details have not yet
been worked out.

Finally, when tracing a curve with large curvature and large initial steplength 
$\Delta t$, it will take some time for $\Delta t$ to become small enough so that Newton's
method can succeed.  
There are many other situations in which curve-tracing will 
experience difficulties.  
However, the paths tracked in homotopy continuation are generally 
very well-behaved with gentle shifts in curvature, except possibly near $t=0$.
For that, \Blue{{\sl endgames}}~\cite{HV98} have been developed 
to handle singularities at $t=0$. 
Methods for handling high curvature, perhaps by adapting endgames to the 
general curve-tracing setting, are a topic for future research.

%
\section{The Khovanskii-Rolle algorithm}\label{S:Algorithm}
%
We describe the structure of the Khovanskii-Rolle algorithm in the simple case of 
finding positive solutions in Section~\ref{subsec:KR_alg_pos} and in the general 
case of finding real solutions in Section~\ref{subsec:KR_alg_real}.
Section~\ref{S:Implementation} will discuss our actual implementation of the algorithm.
Some details of various subroutines---most notably a method for following curves 
beginning at singular points in the boundary of $\Delta$---will be discussed there.

The algorithm has two elementary routines:
curve-tracing (as described in Section~\ref{S:Cur_trac}), and polynomial system solving
for a precomputation. 
We shall treat both elementary routines as function calls in our description of the
Khovanskii-Rolle continuation algorithm.
The algorithm proceeds iteratively with the starting points for curve-tracing being
either the output of the previous stage of the algorithm or solutions to certain
polynomial systems restricted to faces of the domain polytope, $\Delta$.

We first describe the main continuation routine.
This is a curve-tracing algorithm that uses solutions to one system of equations 
as input to find the solutions to another.
We then describe our algorithm for finding all solutions to a system of master functions 
within the positive chamber, followed by the general method for all real solutions.

%
\subsection{Continuation step}
%

Suppose that we have smooth functions $f_1,\dotsc,f_{l-1},g$ on a domain $\Delta\subset\R^l$
with finitely many common zeroes in $\Delta$.  Also suppose that the zero locus of 
$f_1,\dotsc,f_{l-1}$ is a smooth curve \Blue{$C$} in $\Delta$.
Let \Blue{$J$} be the Jacobian determinant $\Jac(f_1,\dotsc,f_{l-1},g)$ of the functions
$f_1,\dotsc,f_{l-1},g$. 

The goal of this step is to trace $C$ to find all
common zeroes of $f_1,\dotsc,f_{l-1},g$ in $\Delta$, taking as input the zeroes of $J$ on
the curve $C$ and the points where $C$ meets the boundary of $\Delta$.
This uses an extension of Rolle's theorem due to Khovanskii.
We only use a very simple version of this theorem, which may be deduced from the classical Rolle
theorem.

\begin{proposition}[Khovanskii-Rolle Theorem]
 Between any two zeroes of $g$ along an arc of $C$ there is at least one
 zero of $J$.
\end{proposition}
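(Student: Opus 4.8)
The plan is to reduce the statement to the classical Rolle theorem applied to the restriction of $g$ to the curve $C$, by identifying the Jacobian determinant $J=\Jac(f_1,\dots,f_{l-1},g)$ with (a nonzero multiple of) the derivative of $g$ along $C$. First I would parametrize the relevant arc of $C$ locally. Since the zero locus of $f_1,\dots,f_{l-1}$ is a smooth curve, the $(l-1)\times l$ Jacobian matrix of $(f_1,\dots,f_{l-1})$ has full rank $l-1$ at every point of $C$; hence $C$ admits a smooth regular parametrization $\gamma(t)$ on any arc, with velocity vector $\gamma'(t)$ spanning the kernel of that Jacobian matrix (equivalently, $\gamma'(t)$ is, up to sign and scaling, the vector of signed maximal minors of the Jacobian of $(f_1,\dots,f_{l-1})$, which is nowhere zero on $C$).

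Next I would compute the derivative of the composite function $g\circ\gamma$. By the chain rule, $\frac{d}{dt}g(\gamma(t)) = \nabla g(\gamma(t))\cdot \gamma'(t)$. The key linear-algebra observation is that this dot product equals $\det$ of the $l\times l$ matrix whose first $l-1$ rows are the gradients $\nabla f_1,\dots,\nabla f_{l-1}$ and whose last row is $\nabla g$, up to the nonzero scalar relating $\gamma'(t)$ to the vector of maximal minors of the $(f_i)$-Jacobian: indeed, expanding $J=\Jac(f_1,\dots,f_{l-1},g)$ along its last row and comparing with the cofactor expression for $\gamma'(t)$ gives $\frac{d}{dt}g(\gamma(t)) = \pm\,c(t)\,J(\gamma(t))$, where $c(t)>0$ is the (nonzero) normalization factor. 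Thus along $C$, the zeroes of $\frac{d}{dt}g(\gamma(t))$ coincide exactly with the zeroes of $J$.

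Now I would invoke classical Rolle: if $p$ and $q$ are two zeroes of $g$ on the arc of $C$, pull them back to parameter values $t_p<t_q$ via $\gamma$; then $g\circ\gamma$ is a smooth real function of one variable vanishing at $t_p$ and $t_q$, so by Rolle's theorem its derivative vanishes at some $t^\ast\in(t_p,t_q)$, and $\gamma(t^\ast)$ is the desired zero of $J$ strictly between $p$ and $q$ on $C$. The only mild technical point — and the one I expect to require the most care — is the global parametrization: $C$ need not be a single arc parametrized by one interval if it has several connected components or closes up, but the statement is purely local to one arc between two consecutive zeroes of $g$, so it suffices to work on a connected arc, where a smooth regular parametrization exists by the implicit function theorem. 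A secondary point is checking that the normalization factor $c(t)$ relating $\gamma'$ to the minors of the Jacobian is genuinely bounded away from zero (it is, since $\|\gamma'(t)\|\neq 0$ on the smooth curve), so that $J$ and $\frac{d}{dt}(g\circ\gamma)$ have literally the same zero set on the arc.
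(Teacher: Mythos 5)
Your argument is correct and is exactly the deduction the paper intends: the paper states this proposition without proof, remarking only that it may be deduced from the classical Rolle theorem, and your identification of $J$ along a regular parametrization $\gamma$ of the arc with a nonvanishing multiple of $\frac{d}{dt}\bigl(g\circ\gamma\bigr)$ (via the cofactor description of the tangent direction), followed by classical Rolle, is that deduction. The only cosmetic point is that the factor $c(t)$ need not be positive, merely nonvanishing and hence of constant sign on the connected arc, which is all that is needed for the zero sets to coincide.
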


Figure~\ref{F:KhRoEx} illustrates the Khovanskii-Rolle Theorem when $l=2$.
Note that the Jacobian determinant $J$ here is the exterior product $df\wedge dg$.
\begin{figure}[htb]
\[
  \begin{picture}(270,130)
   \put(0,0){\includegraphics{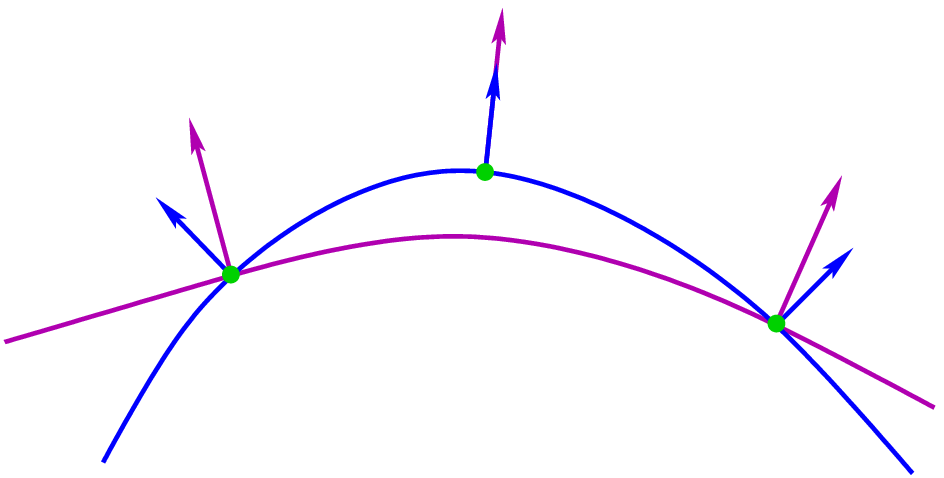}}
   \put(37,6){$\Blue{C}\colon \Blue{f}=0$} \put(-5,29){$\Magenta{g}=0$}
   \put(65,48){\ForestGreen{$a$}}
   \put(31,69){\Blue{$df$}}  \put(60,97){\Magenta{$dg$}}
   \put(136,77){\ForestGreen{$c$}}
   \put(122,111){\Blue{$df$}}  \put(149,124){\Magenta{$dg$}}
   \put(217,33){\ForestGreen{$b$}}
  \put(245,55){\Blue{$df$}}  \put(222,79){\Magenta{$dg$}}
  \end{picture}
\]
\caption{$\Blue{df}\wedge\Magenta{dg}(\ForestGreen{a})<0$, \,
         $\Blue{df}\wedge\Magenta{dg}(\ForestGreen{c})=0$, \,  and 
         $\Blue{df}\wedge\Magenta{dg}(\ForestGreen{b})>0$.}
\label{F:KhRoEx}
\end{figure}

Khovanskii-Rolle continuation begins with a solution of $J=0$ on $C$ and traces $C$
in both directions until either it encounters a solution of $g=0$ on $C$ or some other 
stopping criterion is satisfied, as described below.

This clearly yields all solutions to $g=0$ on compact arcs of $C$.  For 
non-compact arcs that have two or more solutions, the Khovanskii-Rolle Theorem
guarantees that solutions of $J=0$ will be adjacent to solutions of $g=0$.
To ensure that all zeroes of $g=0$ on $C$ are found, the algorithm also traces
non-compact components of $C$ starting from the points where $C$ meets the boundary of
$\Delta$. 
Here is a more complete pseudocode description.

\begin{alg}[Continuation Algorithm]\label{Alg:Continuation_step}{\rm  \ 

 Suppose that $f_1,\dotsc,f_{l-1}$ are differentiable functions defined on a domain
 $\Delta\subset\R^l$ 
 that define a smooth curve $C$, and $g$ is another differentiable function with finitely many 
 simple zeroes on $C$.
 Let $J:=\Jac(f_1,\dotsc,f_{l-1},g)$ be the Jacobian determinant of these functions.\smallskip

 \noindent{\sc Input:}  Solutions $S$ of 
    $f_1=\dotsb=f_{l-1}=J=0$ in the interior of $\Delta$ and the set $T$ of points 
    where $C$ meets the boundary of $\Delta$.

 \noindent{\sc Output:}  All solutions $U$ to $f_1=\dotsb=f_{l-1}=g=0$ in
    the interior of $\Delta$. 

 For each $s$ in $S$, follow $C$ in both directions from $s$ until one of the following  
   occurs.
 \begin{equation}\label{Eq:stopping}
  \begin{tabular}{cl}
    (1) & A solution $u\in U$ to $g=0$ is found,\\
    (2) & A solution $s'\in S$ to $J=0$ is found, or\\
    (3) & A point $t\in T$ where $C$ meets the boundary of $\Delta$ is found.
  \end{tabular}\hspace{20pt}
 \end{equation}

 For each $t$ in $T$, follow 
$C$ in the direction of the interior of $\Delta$ until one of the stopping 
criteria~$\eqref{Eq:stopping}$ occurs.
Each point $u\in U$ is found twice.
}
\end{alg}

The computation of the sets $S$ and $T$ of solutions is presented 
after the full Khovanskii-Rolle algorithm.

\begin{proof}[Proof of correctness]
  By the Khovanskii-Rolle
  Theorem, if we remove all points where $J=0$ from $C$ to obtain a curve $C^\circ$,
  then no two solutions of $g=0$ will lie in the same connected component of $C^\circ$.
  No solution can lie in a compact component (oval) of $C^\circ$, as this is also 
  an oval of $C$, and the number of solutions $g=0$ on an oval is
  necessarily even, so the Khovanskii-Rolle Theorem would imply that $J=0$ has a solution on
  this component. 

  Thus the solutions of $g=0$ lie in different connected components of $C^\circ$, and 
  the boundary of each such component consists of solutions to $J=0$ on $C$ and/or points where  
  $C$ meets the boundary of $\Delta$.
  This shows that the algorithm finds all solutions $g=0$ on $C$, and that it will find each twice,
  once for each end of the component of $C^\circ$ on which it lies.
\end{proof}

Each unbounded component of $C$ has two ends that meet the boundary of $\Delta$.
Writing $\calV(f_1,\dotsc,f_{l-1},g)$ for the common zeroes in $\Delta$ to
$f_1,\dotsc,f_{l-1},g$, and $\mbox{ubc}(C)$ for the unbounded components of $C$, 
we obtain the inequality
 \begin{equation}\label{Eq:path_bound}
  2\#\calV(f_1,\dotsc,f_{l-1},g)\ \leq\ \#\mbox{paths followed}\ =\ 
    2\#\mbox{ubc}(C)
    \ +\ 2\#\calV(f_1,\dotsc,f_{l-1},J)\,,
 \end{equation}
as each solution is obtained twice.
This gives a bound on the number of common zeroes of $f_1,\dotsc,f_{l-1},g$ in $\Delta$
and it will be used to estimate the complexity of this algorithm.

%
\subsection{Khovanskii-Rolle algorithm for positive solutions}\label{subsec:KR_alg_pos}
The Khovanskii-Rolle algorithm yields all solutions to a system of equations
of the form
 \begin{equation}\label{Eq:SMF}
   \Blue{\psi_j(y)}\ :=\ \prod_{i=0}^{l+n} p_i(y)^{\beta_{i,j}}\ =\ 1,\qquad\mbox{ for } j=1,\dotsc,l\,,
 \end{equation}
in the polyhedron 
 \begin{equation}\label{Eq:Delta}
   \Delta\ :=\ \{y\in\R^l \mid p_i(y)>0\}\,,
 \end{equation}
which we assume is bounded.
Here $p_0(y),\dotsc,p_{l+n}(y)$ are degree 1 polynomials that are general in that no
$l+1$ of them have a common solution.
Also, $\beta_{i,j}$ are real numbers that are sufficiently general in a manner
that is explained below. 

We first make some observations and definitions.
In $\Delta$, we may take logarithms of the master functions in~\eqref{Eq:SMF} to obtain
the equivalent system
\[
   \Blue{\varphi_j(y)}\ :=\ \log \psi_j(y)\ =\ 
   \sum_{i=0}^{l+n} \beta_{i,j} \log p_i(y)\ \ =\ 0\,
   \qquad\mbox{for }j=1,\dotsc,l\,.
\]
The Khovanskii-Rolle algorithm will find solutions to these equations by iteratively applying 
Algorithm~\ref{Alg:Continuation_step}.
For $j=l,l{-}1,\dotsc,2,1$ define the Jacobian determinants
\[
   \Blue{\widetilde{J}_j}\ :=\ \Jac(\varphi_1,\dotsc,\varphi_{j};\,
                         \widetilde{J}_{j+1},\dotsc,\widetilde{J}_l)\,.
\]
Also, write the rational function $\psi_j(y)=\psi_j^+(y)/\psi_j^-(y)$ as a quotient of two
polynomials $\psi_j^+$ and $\psi_j^-$.
Writing $f_j(y)$ for the difference $\psi_j^+(y)-\psi_j^-(y)$,
$\psi_i(y)=1$ is equivalent to $f_j(y)=0$.

\begin{alg}[Khovanskii-Rolle Continuation]\label{Alg:Kh_Ro}{\rm  \ 

 Suppose that $\varphi_j$ and $\widetilde{J}_j$ for $j=1,\dotsc,l$ are as above.
 For each $j=0,\dotsc,l$, let $S_j$ be the solutions in $\Delta$ to
\[
    \varphi_1\ =\ \dotsb\ =\ \varphi_{j}\ =\ 
      \widetilde{J}_{j+1}\ =\ \dotsb\ =\ \widetilde{J}_l\ =\ 0\,,
\]
 and for each $j=1,\dotsc,l$, let \Blue{$T_j$} be the set of solutions to 
 \begin{equation}\label{Eq:boundary}
    f_1\,=\,\dotsb\,=\,f_{j-1}\ =\ 0\qquad\mbox{and}\qquad
    \widetilde{J}_{j+1}\,=\,\dotsb\,=\,\widetilde{J}_l\ =\ 0
 \end{equation}
  in the boundary of $\Delta$.\smallskip

 \noindent{\sc Input:} 
   Solutions $S_0$ of $\widetilde{J}_1=\dotsb=\widetilde{J}_l=0$ in $\Delta$, and 
   sets $T_1,\dotsc,T_l$.

 \noindent{\sc Output:}  Solution sets $S_1,\dotsc,S_l$.\smallskip 

 For each $j=1,\dotsc,l$, apply the Continuation
 Algorithm~\ref{Alg:Continuation_step} with 
\[
   (f_1,\dotsc,f_{l-1})\ =\ 
   (\varphi_1, \dotsc,\varphi_{j-1}\,,\,
    \widetilde{J}_{j+1}, \dotsc,\widetilde{J}_l)
\]
  and $g=\varphi_j$.
  The inputs for this are the sets $S_{j-1}$ and $T_j$, and the output is the set $S_j$.
}
\end{alg}

The last set computed, $S_l$, is the set of solutions to the system of master
functions~\eqref{Eq:SMF} in $\Delta$.
We describe our implementation for positive solutions when $l=2$ in 
Section~\ref{S:Implementation} and illustrate the algorithm in 
Section~\ref{S:examples}.

\begin{proof}[Proof of correctness]
 Note that for each $j=1,\dotsc,l$, the $j$th step finds the solution set $S_j$, and
 therefore preforms as claimed by the correctness of the Continuation
 Algorithm~\ref{Alg:Continuation_step}. 
 The correctness for the Khovanskii-Rolle algorithm follows by induction on $j$.
\end{proof}

The Khovanskii-Rolle algorithm requires the precomputation of the sets $S_0$ and
$T_1,\dotsc,T_l$.
The feasibility of this task follows from Lemma~3.4 in~\cite{BS},
which we state below. 
By our assumption on the generality of the polynomials $p_i$, a face of $\Delta$ of
codimension $k$ is the intersection of $\Delta$ with $k$ of the hyperplanes $p_i(y)=0$.

\begin{proposition}\label{P:estimation}
  Let $p_i,i=0,\dotsc,l{+}n$, $\varphi_j,\widetilde{J}_j,j=1,\dotsc,l$, and $\Delta$ be as 
  above. 
\begin{enumerate}
 \item The solutions $T_j$ to~\eqref{Eq:boundary}
  in the boundary of $\Delta$ are the solutions to 
 \[
    \widetilde{J}_{j+1}=\dotsb=\widetilde{J}_l=0
 \]
 in the codimension-$j$ faces of $\Delta$.

\item \rule{0pt}{24pt}${\displaystyle \widetilde{J}_j\cdot 
        \prod_{i=0}^{l+n}p_i(y)^{2^{l-j}}}$ is a polynomial
      of degree $ 2^{l-j}n$.
\end{enumerate}
\end{proposition}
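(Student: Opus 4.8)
The plan is to prove both parts by induction downward on $j$, from $j=l$ to $j=1$, tracking a precise structural description of $\widetilde{J}_j$. The key device is to clear denominators: instead of working with $\widetilde{J}_j$ directly, I would work with the polynomial $P_j := \widetilde{J}_j \cdot \prod_{i=0}^{l+n} p_i(y)^{2^{l-j}}$ and prove by induction that $P_j$ is a genuine polynomial of degree $2^{l-j}n$. Part (2) is then exactly this claim, and part (1) will follow from it together with a dimension/transversality count using the generality of the $p_i$.

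For the induction, first I would record the base observation about the $\varphi_j$ themselves. Each $\varphi_j = \sum_{i=0}^{l+n}\beta_{i,j}\log p_i(y)$ has gradient $\nabla\varphi_j = \sum_i \beta_{i,j}\, \nabla p_i / p_i$, and since $\nabla p_i$ is a constant vector (the $p_i$ are degree $1$), multiplying by $\prod_i p_i$ clears all denominators: $\bigl(\prod_{i=0}^{l+n} p_i\bigr)\nabla\varphi_j$ has polynomial entries of degree $l+n$ (one factor $p_i$ is cancelled in each term, leaving a product of $l+n$ linear forms). This is the $j=l$-type input. Now for the inductive step: $\widetilde{J}_j = \Jac(\varphi_1,\dots,\varphi_j,\widetilde{J}_{j+1},\dots,\widetilde{J}_l)$ is the determinant of the $l\times l$ matrix whose rows are the gradients $\nabla\varphi_1,\dots,\nabla\varphi_j,\nabla\widetilde{J}_{j+1},\dots,\nabla\widetilde{J}_l$. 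I would substitute $\widetilde{J}_{j+1} = P_{j+1}/\prod_i p_i^{2^{l-j-1}}$; differentiating this quotient, $\nabla\widetilde{J}_{j+1}$ has the form $\bigl(\text{poly of degree }2^{l-j-1}n - 1\bigr)\big/\prod_i p_i^{2^{l-j-1}+1}$ in each coordinate — the $+1$ in the denominator exponent coming from the quotient rule hitting one $p_i$ factor. Multiplying row $\nabla\widetilde{J}_{j+1}$ of the determinant by $\prod_i p_i^{2^{l-j-1}+1}$ makes it polynomial; doing this for all $l-j$ such rows multiplies $\widetilde{J}_j$ by $\prod_i p_i^{(l-j)(2^{l-j-1}+1)}$, while multiplying the $j$ rows $\nabla\varphi_k$ by $\prod_i p_i$ each contributes $\prod_i p_i^{\,j}$. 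The bookkeeping to check is that $(l-j)(2^{l-j-1}+1) + j = ?$ and that this, together with the degrees contributed, collapses to give denominator exponent $2^{l-j}$ and total degree $2^{l-j}n$; the degree count per row is $(l+n)$ from each $\varphi$-row-times-$\prod p_i$ minus the drop, and $2^{l-j-1}n-1 + (2^{l-j-1}+1)(l+n+1)$-ish from each $\widetilde{J}_{j+1}$-row — so I expect the degree and the denominator-exponent arithmetic to be the one genuinely delicate calculation, and it is where I would be most careful to get the exponent arithmetic (and the cancellation of the spurious extra $p_i$ factors) exactly right. The cancellation is the crucial point: a priori clearing denominators row-by-row overshoots, and one must argue the Jacobian determinant is actually divisible by the excess power of $\prod_i p_i$ — this should follow because each $\nabla\log p_i$ only contributes a $1/p_i$, not higher powers, so the "true" pole order of $\widetilde{J}_j$ along $p_i=0$ is only $2^{l-j}$, not the naive sum.

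Finally, for part (1): the definition of $T_j$ asks for solutions in the boundary of $\Delta$ to $f_1=\dots=f_{j-1}=0$ and $\widetilde{J}_{j+1}=\dots=\widetilde{J}_l=0$. On the boundary, some $p_i(y)=0$; since the $f_k$ (numerators-minus-denominators of the $\psi_k$) vanish automatically whenever enough of the $p_i$ vanish, I would check that on a codimension-$j$ face the conditions $f_1=\dots=f_{j-1}=0$ are automatically satisfied, reducing $T_j$ to the zero set of $\widetilde{J}_{j+1}=\dots=\widetilde{J}_l=0$ on the union of codimension-$j$ faces. Here I need the generality of the $p_i$ (no $l+1$ with common zero) so that codimension-$j$ faces are exactly intersections of $\Delta$ with $j$ of the hyperplanes, and a dimension count shows the $l-j$ equations $\widetilde{J}_{j+1}=\dots=\widetilde{J}_l=0$ restricted to such an $(l-j)$-dimensional face cut out finitely many points; part (2) guarantees these are honest polynomial equations there after clearing the (nonvanishing, on the relative interior of the face) remaining $p_i$'s. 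I expect part (1) to be essentially formal once the precise statement of what $f_k$ looks like near the boundary is pinned down, with the main obstacle remaining the exponent/degree arithmetic in the inductive step of part (2).
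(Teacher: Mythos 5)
First, a point of reference: the paper does not prove this proposition at all --- it is quoted from Lemma~3.4 of the cited work of Bihan and Sottile, so there is no internal argument to compare with; your proposal has to stand on its own, and as written it has genuine gaps at exactly the points that constitute the content of the lemma. For part (2), the entire difficulty is the cancellation you defer. After the multilinear expansion, each row $\nabla\varphi_k$ has a simple pole along $p_a=0$, but the rows $\nabla\widetilde{J}_m$ ($m>j$) have poles of order $2^{l-m}+1$ there, so the naive pole order of the determinant along each hyperplane is $j+\sum_{m>j}(2^{l-m}+1)=l+2^{l-j}-1$, an overshoot of $l-1$ beyond the claimed $2^{l-j}$. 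Your stated reason for the drop --- ``each $\nabla\log p_i$ only contributes a $1/p_i$, not higher powers'' --- is not correct, precisely because the $\nabla\widetilde{J}_m$ rows do contribute higher-order poles. The actual mechanism is a determinantal cancellation: along $p_a=0$ the leading Laurent coefficient of \emph{every} row is a scalar multiple of the constant vector $\nabla p_a$ (for $\nabla\varphi_k$ it is $\beta_{a,k}\nabla p_a/p_a$, and for $\nabla\widetilde{J}_m$ it is $-2^{l-m}(\cdot)\nabla p_a/p_a^{2^{l-m}+1}$), so the top-order singular terms lie in a rank-one family and cancel in the determinant; making this precise (by Laurent expansion along each hyperplane, or by using that the Jacobian is a derivation in each argument to pass from $\widetilde{J}_{m}$ to the cleared polynomials plus controlled correction terms) is the proof, and it is missing. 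The degree claim has a second missing ingredient: even with pole order $2^{l-j}$ established, the naive degree count gives more than $2^{l-j}n$ (already for $j=l$ one gets $n+1$ from $l+n+1$ linear forms), and the drop requires an argument at infinity using the relation $\sum_{i=0}^{l+n}\beta_{i,j}=0$ built into the bounded-chamber construction (the leading forms are gradients of degree-zero-homogeneous functions, hence annihilate $y$ by Euler's relation and are dependent). You never mention this, and your row-by-row bookkeeping, even done correctly, cannot reach $2^{l-j}n$ without it.

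For part (1), the claim that $f_1=\dots=f_{j-1}=0$ are ``automatically satisfied'' on a codimension-$j$ face is false as stated: $f_k=\psi_k^+-\psi_k^-$ vanishes identically on a face exactly when the indices of the linear forms vanishing there include exponents $\beta_{i,k}$ of both signs, which depends on the sign pattern and fails for some faces (in the paper's own example the curve $\varphi_1=0$ reaches only six of the heptagon's seven vertices). Moreover, the substantive half of part (1) --- that no solutions of the boundary system lie in the relative interiors of faces of codimension smaller than $j$ (on a facet interior exactly one $p_a$ vanishes, so exactly one of $\psi_k^{\pm}$ vanishes and $f_k\neq 0$; the analogous analysis on lower-codimension faces, using the generality of the $\beta_{i,k}$, is what forces codimension $j$ together with the dimension count against $\widetilde{J}_{j+1}=\dots=\widetilde{J}_l=0$) --- is not argued at all in your sketch. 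So the overall strategy (downward induction after clearing denominators, then a face-by-face analysis using generality of the $p_i$) is the right shape, but the two assertions you flag as ``bookkeeping'' and ``essentially formal'' are in fact the theorem, and the justifications offered for them are either absent or incorrect.
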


By~Proposition~\ref{P:estimation}(1), solving the system~\eqref{Eq:boundary} in the boundary of
$\Delta$ is equivalent to solving (at most) $\binom{l+n+1}{j}$ systems of the form
 \begin{equation}\label{Eq:simple_system}
   p_{i_1}\ =\ \dotsb\ =\ p_{i_j}\ \ =\ \ 
   \widetilde{J}_{j+1}\ =\ \dotsb\ =\ \widetilde{J}_l\ =\ 0\,
 \end{equation}
and then discarding the solutions $y$ for which $p_i(y)<0$ for some $i$.
Replacing each Jacobian $\widetilde{J}_j$ by the polynomial
$\Blue{J_j}:=\widetilde{J}_j\cdot\prod_{i=0}^{l+n}p_i(y)^{2^{l-j}}$,
and using $p_{i_1}(y) = \dotsb = p_{i_j}=0$ to eliminate $j$ variables,
we see that~\eqref{Eq:simple_system} is a polynomial system with B\'ezout number
\[
   n \cdot 2n\cdot 4n \dotsb\ 2^{l-j-1}n\ =\ 2^{\binom{l-j}{2}}n^{l-j}
\]
in $l-j$ variables.
Thus the number of solutions $T_j$ to the system~\eqref{Eq:boundary} in the boundary of
$\Delta$ is at most $2^{\binom{l-j}{2}}n^{l-j}\binom{l+n+1}{j}$.

Likewise $S_0$ consists of solutions in $\Delta$ to the system~\eqref{Eq:simple_system} when
$j=0$ and so has at most the B\'ezout  number $2^{\binom{l}{2}}n^l$ solutions.
Thus the inputs to the Khovanskii-Rolle Algorithm are solutions to polynomial systems in $l$ or
fewer variables.

\begin{theorem}\label{Th:Kh-Ro_complexity}
 The Khovanskii-Rolle Continuation Algorithm finds all solutions to the system~\eqref{Eq:SMF}
 in the bounded polyhedron $\Delta$~\eqref{Eq:Delta}, when the degree $1$ polynomials $p_i(y)$
 and exponents $\beta_{i,j}$ are general as described.
 It accomplishes this by solving auxiliary polynomial systems~$\eqref{Eq:simple_system}$ and
 following implicit curves.   For each $j=0,\dotsc,l{-}1$, it will solve at most 
 $\binom{l{+}n{+}1}{j}$ polynomial systems in
 $l{-}j$ variables, each having B\'ezout number $2^{\binom{l-j}{2}}n^{l-j}$.
 In all, it will trace at most
 \begin{equation}\label{Eq:Num_curves}
    l\,2^{\binom{l}{2}+1}n^l\ +\ \sum_{j=1}^l (l{+}1{-}j) 2^{l-j}n^{l-j}\tbinom{l{+}n{+}1}{j}
    \ \ <\ \ l\, \frac{e^2+3}{2}2^{\binom{l}{2}}n^l
 \end{equation}
 implicit curves in $\Delta$.
\end{theorem}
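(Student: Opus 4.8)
The plan is to dispatch the three assertions of the theorem in turn, the first two being essentially immediate. Correctness of the output is already in hand: Algorithm~\ref{Alg:Kh_Ro} returns $S_l$, the set of solutions of~\eqref{Eq:SMF} in $\Delta$, by the proof of correctness given just after it, which rests in turn on the correctness of the Continuation Algorithm~\ref{Alg:Continuation_step}. The count of the auxiliary polynomial systems~\eqref{Eq:simple_system} is a direct reading of Proposition~\ref{P:estimation}: for $1\le j\le l-1$, part~(1) places $T_j$ in the codimension-$j$ faces of $\Delta$, of which there are at most $\binom{l+n+1}{j}$ by the genericity of the $p_i$; on each such face one uses the $j$ vanishing linear forms to eliminate $j$ of the coordinates and is left with the square system $J_{j+1}=\dots=J_l=0$ in $l-j$ variables, whose members have degrees $n,2n,\dots,2^{l-j-1}n$ by Proposition~\ref{P:estimation}(2) and hence B\'ezout number $2^{\binom{l-j}{2}}n^{l-j}$; the case $j=0$ is the single system $J_1=\dots=J_l=0$ in $l$ variables, of B\'ezout number $2^{\binom{l}{2}}n^l$, which bounds $\#S_0$, while $T_l$ is just the set of at most $\binom{l+n+1}{l}$ vertices of $\Delta$. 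So the substance is the curve count.

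For that I would step through the $l$ stages of Algorithm~\ref{Alg:Kh_Ro} and apply the identity~\eqref{Eq:path_bound} at each one. At stage $j$ the Continuation Algorithm runs with $(f_1,\dots,f_{l-1})=(\varphi_1,\dots,\varphi_{j-1},\widetilde{J}_{j+1},\dots,\widetilde{J}_l)$ and $g=\varphi_j$, so its Jacobian determinant is $\pm\widetilde{J}_j$ and the zero set $\calV(f_1,\dots,f_{l-1},J)$ equals $S_{j-1}$; writing $C_j$ for the curve it follows, stage $j$ thus traces $2\#\mbox{ubc}(C_j)+2\#S_{j-1}$ curves. Since $\Delta$ is bounded, every unbounded component of $C_j$ leaves $\Delta$ through exactly two points of $T_j$, so $\#\mbox{ubc}(C_j)=\tfrac{1}{2}\#T_j$; and since each element of $S_j$ is produced twice, $2\#S_j\le 2\#\mbox{ubc}(C_j)+2\#S_{j-1}$, that is $\#S_j\le\#S_{j-1}+\tfrac{1}{2}\#T_j$, and iterating this yields $\#S_{j-1}\le\#S_0+\tfrac{1}{2}\sum_{k<j}\#T_k$. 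Summing $2\#\mbox{ubc}(C_j)+2\#S_{j-1}$ over $j=1,\dots,l$ and substituting the last inequality, one finds $\#T_k$ gathered with coefficient $l+1-k$ and $\#S_0$ with coefficient $2l$; inserting the B\'ezout bounds $\#S_0\le 2^{\binom{l}{2}}n^l$ and $\#T_j\le 2^{\binom{l-j}{2}}n^{l-j}\binom{l+n+1}{j}$ then recovers the left-hand side of~\eqref{Eq:Num_curves}.

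There remains the closed-form inequality. Using $\binom{l}{2}-\binom{l-j}{2}=\tfrac{1}{2}j(2l-j-1)$ one factors $2^{\binom{l}{2}}n^l$ out of every summand, bounds $\binom{l+n+1}{j}\le(l+n+1)^j/j!$, and estimates the resulting series; the factorials in the denominators are responsible for the exponential constant, and the total collapses to $\frac{e^2+3}{2}$ times $l\,2^{\binom{l}{2}}n^l$. This is essentially the arithmetic behind the fewnomial bound of~\cite{BS,BBS}---indeed the right-hand side of~\eqref{Eq:Num_curves} is $l$ times twice that bound---so I would quote or adapt that computation rather than reproduce it. I expect this last comparison to be the one genuinely delicate step, since it must hold uniformly in $n$ and $l$ and is where all the slack left by the crude B\'ezout counts has to be absorbed. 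A lesser point requiring care is the appeal to Proposition~\ref{P:estimation}(1) and to the genericity hypotheses on the $p_i$ and $\beta_{i,j}$, which are exactly what make the relation $\#\mbox{ubc}(C_j)=\tfrac{1}{2}\#T_j$ and the simultaneous validity of all these counts legitimate.
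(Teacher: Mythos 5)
Your proposal follows essentially the same argument as the paper's proof: correctness is quoted from the algorithm's correctness, the per-stage path bound~\eqref{Eq:path_bound} gives $r_j=t_j+2s_{j-1}$ and $s_j\le s_{j-1}+\tfrac12 t_j$, summing gathers $t_k$ with coefficient $l{+}1{-}k$ and $s_0$ with coefficient $2l$, the B\'ezout estimates~\eqref{Eq:est} are inserted, and the closed-form bound is delegated to the computation of Lemma~3.5 in~\cite{BS}, exactly as the paper does. No substantive difference or gap.
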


\begin{proof}
 The first statement is a restatement of the correctness of the Khovanskii-Rolle Continuation
 Algorithm.
 For the second statement, we enumerate the number of paths, following the
 discussion after Proposition~\ref{P:estimation}.
 Let $\Blue{s_j}, \Blue{t_j}$ be the number of points in $S_j$ and $T_j$, respectively, and
 let $r_j$ be the number of paths followed in the $j$th step of the Khovanskii-Rolle
 Continuation Algorithm.

 By~\eqref{Eq:path_bound}, $r_j=t_j+2s_{j-1}$ and $s_j\leq \frac{1}{2}t_j+s_{j-1}$.
 So $r_j \leq t_j+\dotsb+t_1 + 2s_0$, and 
\[
  r_1+\dotsb+r_l\ \leq\ 2s_0 + \sum_{j=1}^l (l{+}1{-}j) t_j\,.
\]
 Using the estimates
 \begin{equation}\label{Eq:est}
   s_0\ \leq\ 2^{\binom{l}{2}}n^l
   \qquad\mbox{and}\qquad
   t_j\ \leq\ 2^{\binom{l-j}{2}}n^{l-j}\tbinom{l+n+1}{j}\,,
 \end{equation}
 we obtain the estimate on the left of~\eqref{Eq:Num_curves}.
 Since $l{+}1{-}j\leq l$, we bound it by 
\[
  2l\Bigl(  2^{\binom{l}{2}}n^l\ +\ \frac{1}{2}
      \sum_{j=1}^l  2^{l-j}n^{l-j}\tbinom{l{+}n{+}1}{j}\Bigr)\,,
\]
 which is bounded by  $l\, \frac{e^2+3}{2}2^{\binom{l}{2}}n^l$, by Lemma 3.5 in~\cite{BS}.
\end{proof}

\begin{remark}\label{Rem:improvement}
 The bound~\eqref{Eq:Num_curves} on the number of paths to be followed is not sharp.
 First, not every system of the linear polynomials
\[
   p_{i_1}(y)\ =\ p_{i_2}(y)\ =\ \dotsb\ =\ p_{i_j}(y)\ =\ 0\,,
\]
 defines a face of $\Delta$.
 Even when this defines a face $F$ of $\Delta$, only the solutions to~\eqref{Eq:simple_system}
 that lie in $F$ contribute to $T_j$, and hence to the number
 of paths followed.
 Thus any slack in the estimates~\eqref{Eq:est} reduces the number of paths to
 be followed.
 Since these estimates lead to the fewnomial bound for $s_l$, we see that the Khovanskii-Rolle
 Continuation Algorithm naturally takes advantage of any lack of sharpness in the fewnomial 
 bound.

 This may further be improved if $\Delta$ has $m<l+n+1$ facets for then
 the binomial coefficients in~\eqref{Eq:Num_curves} become $\binom{m}{2}$.
\end{remark}

%
\subsection{Khovanskii-Rolle algorithm for all real solutions}\label{subsec:KR_alg_real}
%

Section~\ref{subsec:KR_alg_pos} describes how to find solutions to a system
of master functions~\eqref{Eq:SMF} in the polyhedron $\Delta$~\eqref{Eq:Delta}, which is
assumed bounded.
Through Gale duality and the coordinate transformation~\eqref{Eq:PCC}, this gives a method 
to find all positive real solutions to a system of polynomial equations~\eqref{Eq:Poly_system}.

To find all real solutions to a system of polynomial equations or of master functions, one
could simply repeat this process for every chamber in the complement of the hyperplanes
$p_i(y)=0$ for $i=1,\dotsc,l{+}n$.
This is however inefficient as our method (homotopy continuation) for computing the
sets $S_0$ and $T_i$ for $i=1,\dotsc,l$ of starting points for one chamber gives the starting
points for all chambers.
Besides careful bookkeeping, this requires some 
projective coordinate transformations so that the tracking occurs in bounded chambers.

Each point in $T_j$ is incident upon $2^j$ chambers, and thus is the starting point for $2^j$
arcs to be followed in Algorithm~\ref{Alg:Kh_Ro}. 
Surprisingly, this has a mild effect on the complexity, requiring only that
we replace the $e^2$ in~\eqref{Eq:Num_curves} by $e^4$.
This observation, which was made while developing this Khovanskii-Rolle algorithm,
was the genesis of the bound in~\cite{BBS}.

%
\section{Examples}\label{S:examples}
%

We first illustrate the Khovanskii-Rolle algorithm and our implementation on the master function
system of Example~\ref{Ex:Gale}. 
Write the system~\eqref{Eq:first_master} of master functions in logarithmic form,
$\varphi_1(y)=\varphi_2(y)=0$, for $y\in\Delta$, which is the heptagon depicted in
Figure~\ref{F:Gale_pic1}. 
Here, we have $l=2$ and $n=5$ with $7$ linear polynomials.
 \begin{eqnarray*}
  \varphi_1(y)&=&
      -\log(y)+\log(2x{+}2y{-}1)-2\log(4x)+\log\Bigl(\tfrac{1+x-y}{2}\Bigr)\\
   && \qquad\qquad\qquad-2\log\Bigl(\tfrac{6-x-3y}{4}\Bigr)+2\log\Bigl(\tfrac{8-3x-2y}{2}\Bigr)
   -\log(3{-}2x{+}y)\,.\vspace{2pt}\\
  \varphi_2(y)&=&
      \log(y)+6\log(2x{+}2y{-}1)-3\log(4x)+6\log\Bigl(\tfrac{1 +  x - y}{2}\Bigr)\\
   && \qquad\qquad\qquad  -2\log\Bigl(\tfrac{6-x-3y}{4}\Bigr)+7\log\Bigl(\tfrac{8-3x-2y}{2}\Bigr)
   +\log(3{-}2x{+}y)\,.
\end{eqnarray*}
The polynomial forms $J_2,J_1$ of the Jacobians 
are (omitting the middle 60 terms from $J_1$), 
 \begin{eqnarray*}
  J_2&=& -168x^5-1376x^4y+480x^3y^2-536x^2y^3-1096xy^4+456y^5+1666x^4+2826x^3y\\
   &&+3098x^2y^2+6904xy^3-1638y^4-3485x^3-3721x^2y-15318xy^2-1836y^3\\
   && +1854x^2+8442xy+9486y^2-192x-6540y+720\,.\\
   J_1&=&10080x^{10}-168192x^9y-611328x^8y^2-\quad\ \dotsb\quad\ +27648x+2825280y\,.
%
%
\end{eqnarray*}

\begin{remark}
 Instead of the polynomial form $J_2$ of the Jacobian of $\varphi_1$ and
 $\varphi_2$, we could use the Jacobian $J(f,g)$ of $f$ and $g$~\eqref{Eq:Mfg}.
 This however has degree 25 with 347 terms and coefficients of the order of $10^{17}$.
 The Jacobian of this and $f$ has degree 29 and 459 terms.
 This control on the degree of the Jacobians is the reason that we use the logarithms of the
 master function in the formulation of the Khovanskii-Rolle Algorithm.
\end{remark}

 We now describe the Khovanskii-Rolle algorithm on this example.

\noindent{\bf Precomputation.}
 We first find all solutions $S_0$ to $J_1=J_2=0$ in the heptagon $\Delta$,
 and all solutions $J_2=0$ in the boundary of the heptagon.
 Below are the curves $J_2=0$ and $J_1=0$ and the heptagon.
 The curve $J_2=0$ consists of the four arcs indicated.
 The remaining curves in this picture belong to $J_1=0$.
 On the right is an expanded view in a neighborhood of the lower right vertex,
 $(\frac{3}{2},0)$.
\[
  \begin{picture}(390,170)(-30,-10)
   \put(0,0){\includegraphics[height=160pt]{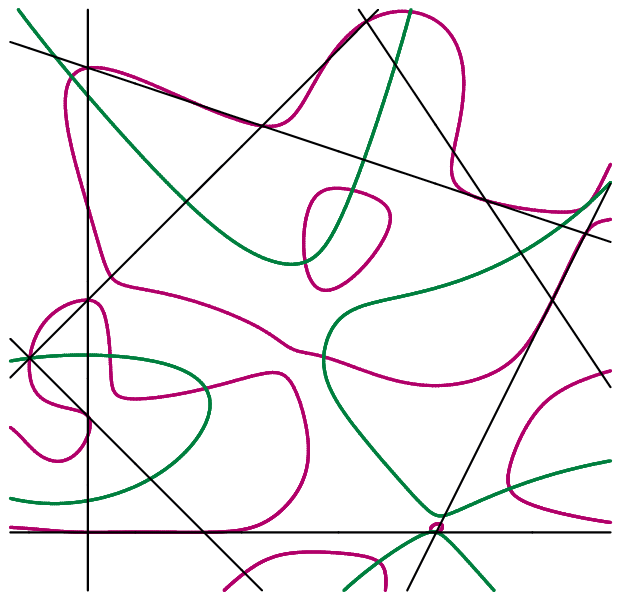}}
   \put(215,0){\includegraphics[height=160pt]{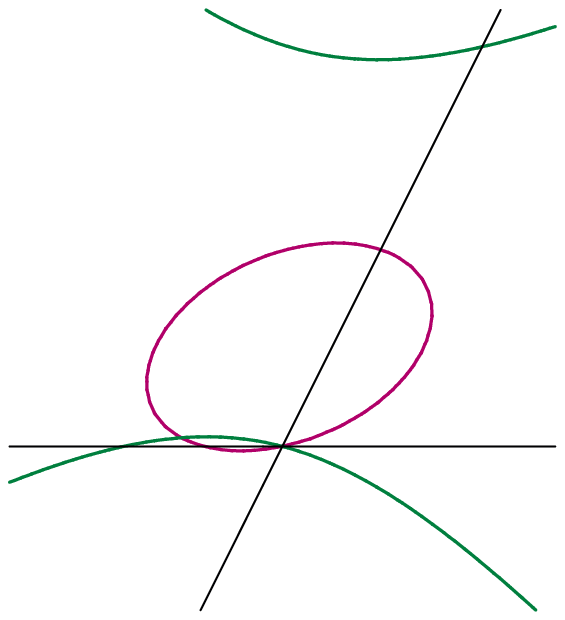}}

   \put(-26,152){$J_2$}\put(-13,157){\vector(1,0){15}}
   \put(-30, 20){$J_2$}\put(-17,25){\vector(1,0){15}}

   \put(152,-2){$J_2$}\put(150,3){\vector(-1,0){15}}\put(150,8){\vector(-1,1){16}}


   \put(263,97){$J_1$}
   \put(300,135){$J_2$}
   \put(220,25){$J_2$}
   \put(283,25){$(\frac{3}{2},0)$}

  \end{picture}
\]
A numerical computation finds 50 common solutions to $J_1=J_2=0$ with 26 real.
Only six solutions lie in the interior of the heptagon with one on the boundary at the
vertex $(3/2,0)$.
There are 31 points where the curve $J_2=0$ meets the lines supporting the boundary of the
heptagon, but only eight lie in the boundary of the hexagon.
This may be seen in the pictures above.

\noindent{\bf First continuation step.}
Beginning at each of the six points in $\Delta$ where $J_1=J_2=0$, the algorithm traces
the curve in both directions, looking for a solution to $\varphi_1=J_2=0$.
Beginning at each of the eight points where the curve $J_2=0$ meets the boundary, it
follows the curve into the interior of the heptagon, looking for a solution to
$\varphi_1=J_2=0$. 
In tracing each arc, it either finds a solution, a boundary point, or another
point where $J_1=J_2=0$. 
We may see that in the picture below, which shows the curves $\varphi_1=0$ and $J_2=0$,
as well as the points on the curve $J_2=0$ where $J_1$ also vanishes.
\[
  \begin{picture}(200,170)(-30,-10)
   \put(0,0){\includegraphics[height=160pt]{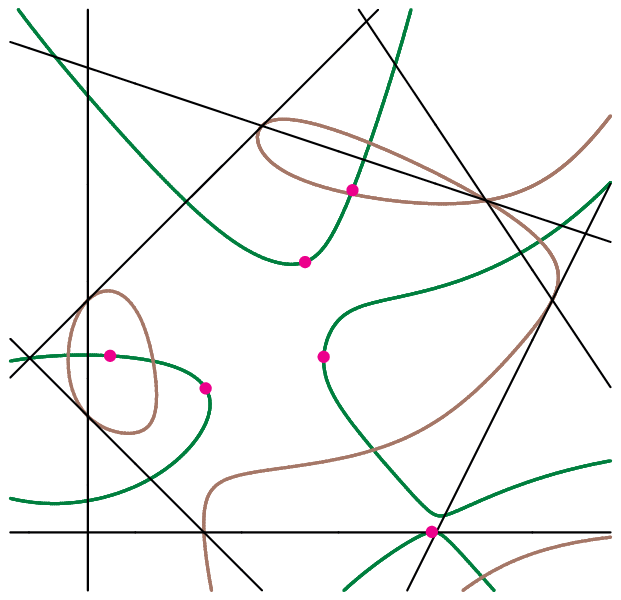}}
   \put(-26,152){$J_2$}\put(-13,157){\vector(1,0){15}}
   \put(-30, 20){$J_2$}\put(-17,25){\vector(1,0){15}}

   \put(154, -4){$J_2$}\put(152,1){\vector(-1,0){15}}\put(152,6){\vector(-1,1){18}}

    \put(-25,67){$\varphi_1$} \put(-8,68){\vector(1,0){24}}
    \put(42, 3){$\varphi_1$} 
     \put(150, 135){$\varphi_1$} 
     \put(192,13){$\varphi_1$} \put(188,15){\vector(-1,0){20}}
 
  \end{picture}
\]
In this step, $2\cdot 6 + 8=20$ arcs are traced.
The three solutions of $J_2=\varphi_1=0$ will each be found twice, and 14 of the tracings
will terminate with a boundary point or a point where $J_1=0$.

\noindent{\bf Second Continuation step.}
This step begins at each of the three points where $\varphi_1=J_2=0$ that were found
in the last step, as well as at each of the six points where $\varphi_1=0$ meets the
boundary of the heptagon (necessarily in some vertices).
\[
  \begin{picture}(230,160)(-27,0)
     \put(0,0){\includegraphics[height=160pt]{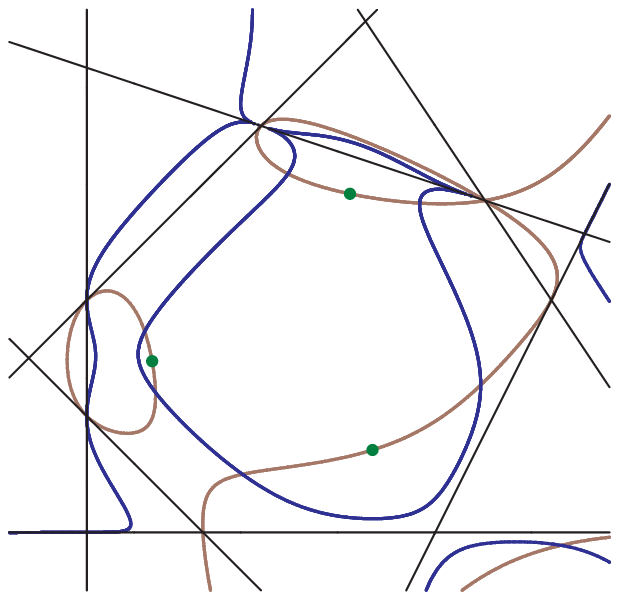}}
    \put(-25,65){$\varphi_1$} \put(-8,66){\vector(1,0){22}}
    \put(42, 3){$\varphi_1$} 
     \put(150, 135){$\varphi_1$} 
     \put(192,13){$\varphi_1$} \put(188,15){\vector(-1,0){20}}
   \end{picture}
\]

Curve tracing, as described in more detail in the next section, can be carried out even 
in the presence of singularities, as in the case of the curves initiating at 
vertices.  In this case, this final round of curve-tracing revealed all six 
solutions within the heptagon, as anticipated.  Furthermore, each solution 
was discovered twice, again, as anticipated. 

By Theorem~\ref{Th:Kh-Ro_complexity}, the bound on the number of paths followed (using
$7=l+n$ in place of $l+n+1$ in the binomials as in Remark~\ref{Rem:improvement}) is
\[
   2\cdot 2\cdot 2^{\binom{2}{2}}\cdot 5^2\ +\ 
   2\cdot 2^{\binom{1}{2}}\cdot 5^1\cdot\tbinom{7}{1}\ +\ 
   1\cdot 2^{\binom{0}{2}}\cdot 5^0\cdot\tbinom{8}{2}\ =\ 298\,.
\]
By Theorem 3.10 in~\cite{BS}, the fewnomial bound in this case is 
\[
  2\cdot 5^2+ \lfloor\tfrac{(5+1)(5+3)}{2}\rfloor\ =\ 
  74\,.
\]
In contrast, we only traced $20+12=32$ curves to find the six solutions in the heptagon.
The reason for this discrepancy is that this bound is pessimistic and the
Khovanskii-Rolle Continuation Algorithm exploits any slack in it.

\noindent{\bf Timings and comparison to existing software.}\label{SS:timings}
The system of Laurent polynomials~\eqref{Eq:Ex_poly} was converted into a system of 
polynomials by clearing denominators.  It was then run through PHCpack, Bertini, and the 
proof-of-concept implementation 
described in the next section.  As described in Section~\ref{S:Background}, this system has 
102 regular solutions, 10 of which are real.  All runs of this section were performed on 
a 2.83~GHz running CentOS with Maple 13, Bertini 1.1.1, and PHCpack v2.3.48.

In blackbox mode, PHCpack used polyhedral methods and found 102 regular solutions in 
around 2.5 
seconds, though it only classified eight of them as real.  Using all default settings 
and a 5-homogeneous start system, 
Bertini found all 102 regular solutions and identified the 10 that are real.  However, 
because of the use of adaptive precision, this took around 23 seconds.  Using 
fixed low precision, this time dropped to around 9 seconds while still identifying 
the solutions correctly.  

The implementation described in the next section found all 
positive solutions of the Gale dual system of master functions~\ref{Eq:Mfg} in around 
15 seconds using safe settings.  Almost all of this time was spent in computing the sets 
$S_0$ and $T_1$ in Bertini, using adaptive precision for security.  By changing to 
fixed low precision for the Bertini portions of the computation, the timing for the 
entire run fell to around 1.4 seconds -- the shortest time of all runs described here.
Though such efficiency is welcome, security is more valuable.  It is expected that more 
sophisticated software than that described in the next section will be more 
efficient.

\noindent{\bf A more extreme example.}\label{SS:timings2}
A polynomial system with high degree, many complex solutions, and few real solutions 
further illustrates the value of Khovanskii-Rolle continuation.  For example, 
consider the system of Laurent polynomials

\begin{equation}\label{Eq:hex7663_poly}
 \begin{array}{rclcrcl}
10500       - tu^{492} - 3500t^{-1}u^{463}v^5w^5 &=& 0\\
10500 - t               - 3500t^{-1}u^{691}v^5w^5 &=& 0\\
14000 - 2t + tu^{492} - 3500v &=& 0\\
14000 + 2t - tu^{492} - 3500w &=& 0.\\
 \end{array}  
\end{equation}

By solving a set of master functions Gale dual to (\ref{Eq:hex7663_poly})
for 0, we obtain 

 \begin{equation}\label{Eq:hex7663_master}
  \begin{array}{rcl}
    3500^{12}x^8y^4(3-y)^{45} - (3-x)^{33}(4-2x+y)^{60}(2x-y+1)^{60} &=& 0\\
    3500^{12}x^{27}(3-x)^8(3-y)^4 - y^{15}(4-2x+y)^{60}(2x-y+1)^{60} &=& 0.\\
  \end{array}
 \end{equation}

System (\ref{Eq:hex7663_poly}) has 7663 complex solutions but only six positive 
real solutions.
PHC computes these solutions in 39 minutes, 38 seconds while the implementation 
of the next section takes only 23 seconds to find them, using safe settings.

%
\section{Implementation}\label{S:Implementation}
%

We have implemented the Khovanskii-Rolle Algorithm to find all solutions to a system of master
functions in a bounded polyhedron $\Delta$, but only when $l=2$.
This proof-of-concept implementation relies on external calls to 
the Bertini software package~\cite{BHSW06} as a polynomial system solver.  
The implementation is in a Maple script that is publicly available at~\cite{KRWeb}. 
We plan to implement a general version of the Khovanskii-Rolle algorithm 
in the future.

We describe some aspects of this implementation, including the precomputation to find the
solution sets $S_0$ and $T_1$ and the curve-tracing method from these points and from $S_1$, 
all of which are smooth points on the traced curves.
We also discuss tracing curves from the vertices $T_2$, which is non-trivial as 
these curves are typically singular at the vertices.
Lastly, we discuss procedures for checking the output.

%
\subsection{Polynomial system solving}

The precomputation of $S_0$ and $T_1$ for Algorithm~\ref{Alg:Kh_Ro} involves finding the 
solutions of systems of Jacobians~\eqref{Eq:simple_system} within $\Delta$ and on its
boundary. 
For this, we use the system Bertini~\cite{BHSW06}.
For each system to be solved, the maple script
creates a Bertini input file, calls Bertini, and collects
the real solutions from an output file.  
Bertini, as with all homotopy methods, finds all complex solutions.
However, as explained previously, this overhead may be much less than would be 
encountered in the direct use of homotopy methods to solve the original 
system.

%
\subsection{Curve-tracing from smooth points}

The points of $S_0$ and $T_1$ from which we trace curves in the first step of the
algorithm, as well as the points $S_2$ used in the second step, are smooth points of the
curves $J_2=0$ and $\varphi_1=0$, respectively.
This curve-tracing proceeds as in Section~\ref{S:Cur_trac}.  
Indeed, suppose we are tracing a curve $C$ in the polytope $\Delta$, starting from points where $C$
meets the boundary of $\Delta$ and from interior starting points where a Jacobian determinant $J$
 vanishes, and we seek points of $C$ where some function $g$ vanishes.
Then, as described in the  Continuation Algorithm~\ref{Alg:Continuation_step}, there are 
three basic stopping criteria:
\begin{enumerate}

\item The tracer passes a point where $g=0$, which is a point we are looking for.

\item The tracer passes a point where $J=0$, which is another starting point.

\item The tracer leaves the polytope.
\end{enumerate}

The computation of the tangent and normal lines is straightforward, as is the 
linear algebra required for curve-tracing.  
Each predictor-corrector step gives a point $p$ near $C$.  
We compute the values of $g, J$, and the degree 1 polynomials $p_i$
defining $\Delta$ at $p$.
A sign change in any indicates a stopping criteria has been met.
When this occurs, bisection in the steplength $\Delta t$ 
from the previous approximation is used to refine the point of interest.

It may seem that these (sign-based) criteria will fail when there are clustered
(or multiple) solutions to $g=0$ on the curve between the current and previous
approximation.
However, the Khovanskii-Rolle Theorem implies there will be zeroes of the Jacobian determinant
$J$ interspersed between these solutions (or coinciding with the multiple solutions).
Furthermore, the number of solutions to $g=0$ and to $J=0$ will have different parities,
so that one of the two functions $g$ and $J$ will change sign, guaranteeing that 
one of the stopping criteria will be triggered for such a curve segment.

Curve-tracing involves a trade-off between security and speed.
Security is enhanced with a small steplength $\Delta t$ 
and allowing only one or two Newton correction steps. 
However, these settings contribute to slow curve-tracing.  
The examples in Section~\ref{S:examples} were computed with secure settings.  
In Section~\ref{subsec:checks} we give methods to detect some errors 
in this curve-tracing which can allow less secure settings.

%
\subsection{Curve-tracing from vertices}

The curves to be traced are typically singular at the vertices of $\Delta$.
While traditional curve-tracing fails at such points, we employ a simple alternative
that takes advantage of the local structure of the curves.

At a vertex $v$ of $\Delta$, we make a linear change of coordinates so that the two
incident edges are given by the coordinate polynomials $y_1$ and $y_2$.
Then $\varphi_1(y)=0$ may be expressed as 
 \[
   \prod_{j=0}^{n+2} p_i(y)^{\beta_{i,1}}\ =\ 1\,.
 \]
Setting $p_{n+1}=y_1$ and $p_{n+2}=y_2$, we may solve for $y_2$ to obtain
\[
   y_2\ =\ y_1^{-\beta_{n+1,1}/\beta_{n+2,1}}\cdot \prod_{i=0}^n
   p_i(y)^{-\beta_{i,1}/\beta_{n+2,1}}\,.
\]
In the neighborhood of $v$ this is approximated by the monomial curve
\[
   y_2\ =\ y_1^{-\beta_{n+1,1}/\beta_{n+2,1}}\cdot \prod_{i=0}^n
   p_i(v)^{-\beta_{i,1}/\beta_{n+2,1}}\,,
\]
where we have evaluated the terms $p_i(y)$ for $i\leq n$ at the vertex $v$, 
i.e., the product is just a constant $\alpha$.
We may write this expression as
 \begin{equation}\label{Eq:monomial}
    y_2\ =\ \alpha \cdot y_1^\beta\;;
 \end{equation}
note that $\alpha>0$.

We do not begin tracing the curve $\varphi_1(y)=0$ at $v$, but instead begin from a 
point $c_p$ on the monomial curve~\eqref{Eq:monomial} in $\Delta$ near $v$.
If the first predictor-corrector step from $c_p$ succeeds to approximate $\varphi_1(y)=0$,
then we trace this curve as described before.
If this predictor-corrector step fails from $c_p$, then we simply choose a point on the
monomial curve a bit further from $v$ and try again.
In our experience, this special form of monomial tracking quickly gives way to 
usual curve-tracing on $\varphi_1(y)=0$.

%
\subsection{Procedures for checking the output}\label{subsec:checks}

General curve-tracing is not foolproof.  
If the curves to be traced are very close together, then curve-jumping may occur, leading to
missed solutions. 
However, for curve-tracing in the Khovanskii-Rolle algorithm, there is a simple way to check
for such errors. 

As described after Algorithm~\ref{Alg:Continuation_step}, each point in the sets $S_i$ for
$i=1,\dotsc,l$ should be discovered twice as the end of an arc that is tracked.
Thus, if a solution is not found twice, an error occurred in the curve-tracing.  

This check will not capture all errors.  The development of further 
verification and certification procedures is a goal for future research.  
This lack of checks is not uncommon for new algorithms, including 
the methods introduced in the early development of numerical algebraic geometry\bigskip

%
\section{Conclusions}
%

Numerical homotopy continuation is a robust and efficient algorithm 
for finding all complex solutions to a system of polynomial equations.
Real solutions are obtained by selecting those solutions with small imaginary parts.
While often practical, this is wasteful and does not exploit the real algebraic nature of the
problem. 

We presented a numerical continuation algorithm to find all real or positive
solutions to a system of polynomials, along with details of our implementation and two 
examples.  This new Khovanskii-Rolle continuation algorithm is efficient in that 
the number of paths to be followed depends upon the corresponding fewnomial bounds 
for the numbers of real solutions and not on the number of complex solutions.
This is a significant difference between our new algorithm and all other known 
methods for solving polynomial systems.

This algorithm does not directly solve the given polynomial system but rather an
equivalent (Gale dual) system consisting of master functions in the
complement of an arrangement of hyperplanes.
This appears to be the first curve-tracing algorithm that finds only
real solutions to a system of equations. 

This paper is the first step in this new line of research.
There are clear generalizations to higher dimensions.
We plan further research into techniques for 
tracing the curves that begin at singularities.
In contrast to path-following in homotopy continuation, the security of curve-tracing in this
algorithm relies on heuristics. 
Another research direction is to enhance the security and efficacy of
curve-tracing. 

When our algorithm has been implemented in more than two variables
($l>2$), we plan to use it to study real solutions to systems of polynomial equations.

 
\providecommand{\bysame}{\leavevmode\hbox to3em{\hrulefill}\thinspace}
\providecommand{\MR}{\relax\ifhmode\unskip\space\fi MR }
\providecommand{\MRhref}[2]{%
  \href{http://www.ams.org/mathscinet-getitem?mr=#1}{#2}
}
\providecommand{\href}[2]{#2}

 
\end{document}